\newcounter{dummy} \numberwithin{dummy}{section}
\newtheorem{theorem}[dummy]{Theorem}
\newtheorem{definition}[dummy]{Definition}
\newtheorem{proposition}[dummy]{Proposition}
\newtheorem{lemma}[dummy]{Lemma}
\newtheorem{remark}[dummy]{Remark}
\newtheorem{fact}[dummy]{Fact}
\newtheorem{bijection}[dummy]{Bijection}
\tikzset{
    myboxcircle/.style={circle,draw=black,align=center},
}
\tikzset{
    myboxrounded/.style={rounded rectangle,draw=black,align=center},
}
\tikzset{
    myboxrectangle/.style={rectangle,draw=black,align=center},
    myarrow/.style={->, >=latex', shorten >=1pt, thick},
    mylabel/.style={text width=7em, text centered} 
}
   \tikzstyle{block} = [draw,text centered, minimum height=0.8em,minimum width=2em]
        \tikzstyle{block1} = [text centered, minimum height=2.8em,minimum width=12em] 
\newtheorem{corollary}[dummy]{Corollary}
\DeclareSymbolFont{cyrletters}{OT2}{wncyr}{m}{n}
\DeclareMathSymbol{\Sha}{\mathalpha}{cyrletters}{"58}
\title[On the Selmer group and rank of a family of elliptic curves]{On the Selmer group and rank of a family of elliptic curves and curves of genus one violating the Hasse principle}
\author{eleni agathocleous}
\address{\parbox{\linewidth}{Max-Planck-Institut f\"ur Mathematik, \\ Vivatsgasse 7, D-53111 Bonn, Germany.}}
\email{agathocleous@mpim-bonn.mpg.de}
\address{\parbox{\linewidth}{Center for Quantum Technology and Applications (CQTA), \\ Deutsches Elektronen-Synchrotron DESY, \\ Platanenallee 6, 15738 Zeuthen, Germany.}}
\email{eleni.agathocleous@desy.de}
\begin{document}
\begin{center}
\maketitle{\textbf{Abstract \footnote[1]{This work has been supported by the European Union’s H2020 Programme under grant agreement number ERC-669891, and partially supported with funds from the Ministry of Science, Research and Culture of the State of Brandenburg within the Centre for Quantum Technology and Applications (CQTA).}}}
\end{center}

\tiny
We study an infinite family of $j$-invariant zero elliptic curves $E_{D}:y^{2}=x^{3}+16D$ and their $\lambda$-isogenous curves $E_{D'}:y^{2}=x^{3}-27\cdot16D$, where $D$ and $D' = -3D$ are fundamental discriminants of a specific form, and $\lambda$ is an isogeny of degree $3$. A result of Honda guarantees that for our discriminants $D$, the quadratic number field $K_{D} = \mathbb{Q}(\sqrt{D})$ always has non-trivial 3-class group. We prove a series of results related to the set of rational points $E_{D'}(\mathbb{Q}) \setminus \lambda(E_{D}(\mathbb{Q}))$, and the $SL(2,\mathbb{Z})$-equivalence classes of irreducible integral binary cubic forms of discriminant $D$. By assuming finiteness of the Tate-Shafarevich group, we derive a parity result between the rank of $E_{D}$ and the rank of its $3$-Selmer group, and we establish lower and upper bounds for the rank of our elliptic curves. Finally, we give explicit classes of genus-$1$ curves that correspond to irreducible integral  binary cubic forms of discriminant $D=48035713$, and we show that every curve in these classes violates the Hasse Principle.

\

\textbf{AMS Mathematics Subject Classification:} Primary: 11R29, 11G05. 

\

\textbf{Keywords:} Elliptic Curves, Selmer group, Rank, Ideal Class Group, Tate-Shafarevich, Hasse Principle.

\  

\normalsize
\section{Introduction}\label{intro}   The relation between the Selmer group of an elliptic curve and the ideal class group of an underlying quadratic number field is well known, and it is present in many papers related to descent on elliptic curves, such as \cite{Crem1}, \cite{Crem3} and \cite{SchaeferStoll}. The particular relation between the $3$-part of the ideal class group and the $3$-Selmer group of elliptic curves, has received great attention by many authors, as one can see for example in the works of \cite{Bandini}, \cite{Quer}, \cite{Satge}, \cite{Top}, and more recently of \cite{Bhar1} and \cite{Bhar2}. Satg\'e \cite{Satge} in particular, studied the Selmer group of the general family of elliptic curves of the form $y^2 = x^3+k$, relative to a 3-isogeny, and showed that the rank of the Selmer group depends heavily on the decomposition of $k$. 

In this paper, we study a specific family of such curves of $j$-invariant zero, namely the elliptic curves $E_{D}: y^2 = x^3 +16D$, where $D$ is a squarefree integer of the form $D = D_{m,n} = 4m^3 -27n^2$ and the corresponding cubic polynomial $f_{m,n}(x) = x^{3}-mx+n$ is irreducible. The curves $E_{D}$ have an isogeny $\lambda$ of degree $3$, and their $\lambda$-isogenous curves $E_{D'}$ are of the form $E_{D'}: y^2 = x^3 -27\cdot16D$, with $D' = -3D$ also a fundamental discriminant since $3 \nmid D$. A result of Honda \cite{Honda2} guarantees that for our discriminants $D$, the quadratic number field $K_{D} = \mathbb{Q}(\sqrt{D})$ has non-trivial 3-class group. Our main focus is the study of a direct relation between the elements of the quotient group $E_{D'}(\mathbb{Q})/\lambda(E_{D}(\mathbb{Q}))$ under the Fundamental $3$-descent map, and the unramified cubic extensions of $K_{D}$, emerging from the so-called $3$-virtual units of the quadratic resolvent $K_{D'} = \mathbb{Q}(\sqrt{-3D})$ of $K_{D}$. The topic and the main results of each section, are as follows:

In Section~\ref{CubicExtns} we discuss known results related to the unramified cubic extensions of the quadratic number fields $K_{D}$, in relation to the so-called $3$-virtual units of $K_{D'}$. Via a direct application of the results 
of Erd\"os~\cite{Erdos}, we show in Lemmas~\ref{lm:InfDP}~and~\ref{lm:InfPol} that the set $\mathcal{D}$ containing all these odd fundamental discriminants $D_{m,n}$ whose corresponding cubic polynomial $f_{m,n}(x) = x^{3}-mx+n$ is irreducible, is an infinite set, and therefore the corresponding family of elliptic curves $\{E_{D} \ | \ D\in \mathcal{D}\}$ is infinite. At the same time, this result establishes the existence of an infinite family of cubic fields $L$, namely those generated by the polynomials $f_{m,n}$, which are monogenic; i.e. their maximal order $\mathcal{O}_{L}$ is of the form $\mathbb{Z}[\alpha]$, for some $\alpha \in \mathcal{O}_{L}$. The property of monogenicity is directly related to the known Hasse's Problem from the 1960's regarding the classification of algebraic number fields (see for example \cite{MotodaEtal} and \cite{Nakahara}), and many authors study this problem in its own right (see for example \cite{Boydetal}, \cite{Ibarraetal} and \cite{Kedlaya}). The study of cubic monogenic fields has recently gained increased attention, particularly due to their direct connection with the rank of the elliptic curves $y^{2}=x^{3}+k$ \cite{AlpogeBhargShni}. This direct connection is a key focus of our paper, and in the subsequent sections we explicitly demonstrate through a series of results, how monogenic orders of the cubic fields $L$ of discriminant $D$, whose index in the maximal order $\mathcal{O}_{L}$ is an integral cube, provide a lower bound for the rank of the elliptic curves $E_{D}$. 

In Section~\ref{EllCurvesED}, we discuss known results on descent by $3$-isogeny adjusted to the elliptic curves $E_{D}$. In Proposition~\ref{Selmer}, via the results of Satg\'e \cite{Satge}, we prove that the ranks of the Selmer groups $\mathcal{S}_{\lambda}$ and $\mathcal{S}_{\lambda'}$, with respect to the isogenies $\lambda$ and $\lambda'$, are always equal to the following specific values associated with the $3$-rank of the ideal class group of $K_{D}$, which we denote by $r_{3}(D)$:
\begin{proposition}
For every $D \in \mathcal{D}$, the rank of the Selmer groups $\mathcal{S}_{\lambda}$ and $\mathcal{S}_{\lambda'}$ of the curves $E_{D}$ and $E_{D'}$ are as follows: 
$$d(\mathcal{S}_{\lambda}) = d(\mathcal{S}_{\lambda'}) = r_{3}(D), \ \ \ \ \text{if $D < -4$}$$ 
$$d(\mathcal{S}_{\lambda}) = r_{3}(D) \ \text{and} \ d(\mathcal{S}_{\lambda'}) = r_{3}(D) + 1, \ \ \ \  \text{if $D > 0$}.$$ 
\end{proposition}
In the same section, by employing Voronoi's algorithm, we prove in Proposition~\ref{prop:CubeCoeff} that a point $P$ lies in the set $E_{D'}(\mathbb{Q}) \setminus \lambda(E_{D}(\mathbb{Q}))$ if and only if there exists an irreducible integral binary cubic form of discriminant $D$ having leading coefficient an integral cube, and which defines the cubic field $L$ associated to the point $P$. The direct relation between these points $P$ and their associated cubic field $L$ is proved in Lemma~\ref{lem:basis}. 

In Section~\ref{sec:rank}, by assuming finiteness of the Tate-Shafarevich group and by employing Proposition~1.1 from above, we derive in Corollary~\ref{ParityRanksDiagram} a parity result between the rank of $E_{D}$ and the sum of the ranks of its two Selmer groups $\mathcal{S}_{\lambda}$ and $\mathcal{S}_{\lambda'}$. This corollary, together with Proposition~\ref{Rank} that gives lower and upper bounds for the rank of our elliptic curves, constitute the main result of the paper. The lower bound for the curves $E_{D}$ is directly related to the rank of a group $\mathcal{M}$ that is a subgroup of the $3$-Selmer group of $K_{D'}$, by definition. This group $\mathcal{M}$ is generated by all those $3$-virtual units, which give rise to cubic fields that can be defined by a binary cubic form of discriminant $D$ and with leading coefficient a cube. By our Proposition~\ref{prop:CubeCoeff}, the group $\mathcal{M}$ corresponds to the so-called \emph{solvable} part of the Selmer group $\mathcal{S}_{\lambda}$ of the elliptic curve $E_{D}$, hence its rank $\mathfrak{r}_{3}$ as an $\mathbb{F}_{3}$-vector space serves as a lower bound. In the proposition below we give the lower and upper bounds that we have established for the curves $E_{D}$. The third statement of the proposition is the parity result that we proved in the aforementioned Corollary~\ref{ParityRanksDiagram}.
\begin{proposition} 
For every $D \in \mathcal{D}$, by assuming finiteness of the $3$-primary part $\Sha(E_{D})[3^{\infty}]$ of the Tate-Shafarevich group $\Sha(E_{D})$ of the elliptic curves $E_{D}$, the rank of the elliptic curves $E_{D}$, and therefore of $E_{D'}$, is bounded as follows: 
\begin{enumerate}
\item If $\mathfrak{r}_{3}$ is odd, then
$$(i) \  \ 2 \leq  \mathfrak{r}_{3} + 1 \leq rank(E_{D}) \leq 2r_{3}(D), \ \text{if} \ D < -4$$  
$$(ii) \ \ 1 \leq \mathfrak{r}_{3} \leq rank(E_{D}) \leq 2r_{3}(D) +1, \ \text{if} \ D > 4.$$
\item If $\mathfrak{r}_{3}$ is even, then
$$(i) \  \ 2 \leq  \mathfrak{r}_{3} \leq rank(E_{D}) \leq 2r_{3}(D), \ \text{if} \ D < -4$$  
$$(ii) \ \ \mathfrak{r}_{3} +1 \leq rank(E_{D}) \leq 2r_{3}(D) +1, \ \text{if} \ D > 4.$$
\item $rank(E_{D})$ is always even for negative $D \in \mathcal{D}$ and is always odd for positive $D \in \mathcal{D}$.
\end{enumerate}
\end{proposition}

The explicit knowledge of the elements of the Selmer group associated to the isogeny $\lambda$ that we established in the previous sections, allows us in Section~\ref{Torsors} to describe the classes of \emph{solvable} Torsors associated with the Selmer group $\mathcal{S}_{\lambda}$. In Proposition~\ref{ViolateHassePrinciple} in particular, we give an if-and-only-if condition on whether specific genus-$1$ curves violate the Hasse Principle. In Section~\ref{subs:solvability} we discuss the \emph{solvable} part of $\mathcal{S}_{\lambda}$ in relation to the open question of the uniform boundedness of the ranks of elliptic curves over $\mathbb{Q}$, and we address computational aspects related to the lower bound associated with $\mathfrak{r}_{3}$. 

Finally, in Section~\ref{sec:examples} we give explicit classes of genus-$1$ curves that correspond to irreducible integral  binary cubic forms of discriminant $D=D_{229,3}$, and we show that every curve in these classes violates the Hasse Principle.

\section{Unramified cubic extensions of quadratic number fields}\label{CubicExtns}
\subsection{The $3$-part of the ideal class group.}
Throughout the paper, $disc(\cdot)$ will denote the integer that is equal to the discriminant of the object in the parentheses. 

Honda, in \cite[Proposition 10]{Honda2}, showed the following important result:
\begin{proposition}\label{PropHonda} \emph{\cite[Proposition 10]{Honda2}} If the class number of a quadratic number field $K$ is a multiple of $3$, $K$ must be of the form 
$$K_{\alpha,\beta} = \mathbb{Q}(\sqrt{4\alpha^3-27\beta^2}),$$ for some $\alpha, \beta \in \mathbb{Z}$. Conversely, if for integers $\alpha, \beta$ we have $gcd(\alpha,3\beta)=1$ and if $\alpha$ cannot be represented by a form $(\beta+c^{3})c^{-1}$ for any $c \in \mathbb{Z}$, then $3 | h_{K_{\alpha,\beta}}$, the class number of $K_{\alpha,\beta}$. $\hfill \square$  \end{proposition} 

We note that in Honda's result, $$4\alpha^3-27\beta^2 = \mathfrak{f}^{2} disc(K_{\alpha,\beta}),$$ where $disc(K_{\alpha,\beta})$ is the fundamental discriminant of $K_{\alpha,\beta}$ and $\mathfrak{f}\geq 1$ is some conductor. Furthermore, the condition on $\alpha$ not being represented by the given form, for any $c \in \mathbb{Z}$, implies that the cubic polynomial $$f_{\alpha,\beta}(x) = x^{3}-\alpha x+\beta \in \mathbb{Z}[x]$$
is irreducible. In the same paper \cite{Honda2}, Honda showed that the irreducible cubic polynomials of the form $f_{\alpha,\beta}$ define the unramified cubic extensions of $K_{\alpha,\beta}$, and the Galois group of their Galois closure, equivalently, the Galois group of their splitting field 
\begin{equation}\label{splitting} L_{\alpha,\beta} \supsetneqq K_{\alpha,\beta} \supsetneqq \mathbb{Q},\end{equation} is isomorphic to the symmetric group $\mathbb{S}_{3}$.

\subsection{Cubic fields of fundamental discriminant.}\label{FieldsDiscD}
Denote by $L_{d}$ the algebraic number fields of degree $d$ whose splitting field $Spl_{d}$ has Galois group isomorphic to the symmetric group $\mathbb{S}_{d}$. For degree $d \leq 5$, Kondo in \cite[\S 4]{Kondo} and also Yamamura in  \cite[Remark on pg. 107]{Yamamura} showed the following:

\begin{theorem}\label{disc} \emph{(\cite[\S 4]{Kondo} \& \cite[Remark on pg. 107]{Yamamura})} Assume that $Spl_{d}/K$ is unramified at all finite primes, where $K$ is a quadratic subfield. Then, if $d \leq 5$, the set of the non-conjugate algebraic number fields $L_{d}$ equals the set of the non-conjugate algebraic number fields of degree $d$ with discriminant equal to that of the quadratic number field $K$. $\hfill{\square}$\end{theorem}     

For $d=3$, the unramified cubic extensions $Spl_{d}/K$ correspond to the unramified cubic extensions of Honda described in Proposition~\ref{PropHonda} above, with the underlying quadratic number fields $K$ being of the form $K_{\alpha,\beta}$. The cubic fields $L_{3}$ have discriminant $disc(L_{3}) = disc(K_{\alpha,\beta})$, they can be defined by a cubic polynomial of the form $f_{\alpha,\beta}$, and they identify with the splitting fields $L_{\alpha,\beta}$ of Relation~(\ref{splitting}). 

\subsection{Our Discriminants $D$}\label{subs:discs} 
For any pair of integers $m, n$ we compute the quantity $D_{m,n}=4m^{3}-27n^{2}$. Whenever $D_{m,n}$ is squarefree we include it in the set 
$$\mathcal{D}'\coloneqq \{ D_{m,n} \ | \ m,n \in \mathbb{Z} \ \text{and} \ D \ \text{is squarefree}\},$$
and the corresponding pair of integers $(m,n)$ we include it in the set 
$$\mathcal{P}'\coloneqq \{ (m,n) \in \mathbb{Z}^{2}\ | \ D_{m,n} \in \mathcal{D}' \}.$$

\begin{remark}
  Since $D_{m,n} \in \mathcal{D}'$ is squarefree, the following are straightforward observations:
  \begin{enumerate}
      \item  $\emph{gcd}(2m,3n)=1$, yielding in particular that $n$ is odd, and therefore
      \item $D_{m,n} = 4m^{3}-27n^{2} \equiv 1 \bmod 4.$
  \end{enumerate}
\end{remark}
We show below, via a direct application of a theorem of Erd\"os, that the set $\mathcal{D}'$, and therefore the set $\mathcal{P}'$, is infinite. 
\begin{lemma}\label{lm:InfDP}
    The sets $\mathcal{D}'$ and $\mathcal{P}'$ are infinite.
\end{lemma}
\begin{proof}
For any fixed odd integer $n_{0}$, we consider the cubic polynomial $$h_{n_{0}}(x) = 4x^{3}-27n_{0}^{2}.$$ It is straightforward to see that there are no linear polynomials $g_{1}, g_{2}$ with integral coefficients such that $h_{n_{0}} = g_{1}^{2}g_{2}$. Therefore, following directly from Erd\"os' Theorem \cite[Theorem, pg. 417]{Erdos}, we have that there are infinitely many integers $m$ for which the value $h_{n_{0}}(m)$ is squarefree. 
\end{proof}

\begin{remark}\label{rmrk:density}
If we fix an integer $m_{0}$ coprime to $3$, we can look instead at the quadratic polynomials $g_{m_{0}}(x) = 27x^2-4m_{0}^3$. Again, it is easy to see that $g_{m_{0}}$ cannot be written as a square of a linear polynomial with integral coefficients. Then, as Erd\"os mentions in the same paper \cite[pp. 416 - 417]{Erdos}, it has been known in this case as well that there are infinitely many integers $n$ for which $g_{m_{0}}(n)$ is squarefree. Moreover, in this case, the integers $n$ for which $g_{m_{0}}(n)$ is squarefree, have positive density. 
\end{remark}

Our discriminants $D_{m,n}$ are fundamental. Hence, the ones whose corresponding cubic polynomial $$f_{m,n}(x) = x^{3} - mx + n$$ is irreducible, are precisely the Honda discriminants with conductor $\mathfrak{f}=1$ and with  $Gal(L_{m,n}/\mathbb{Q}) \cong S_{3}$, where the fields $L_{m,n}$ are the splitting fields of Relation~(\ref{splitting}) corresponding to these fundamental discriminants $D_{m,n}$. 

\begin{lemma}\label{lm:InfPol}
    The set of cubic polynomials
$$\mathcal{F}\coloneqq \{ f_{m,n} \ | \ f_{m,n} \ \text{irreducible}, disc(f_{m,n}) \in \mathcal{D}', Gal(L_{m,n}/\mathbb{Q}) \cong S_{3}\}$$
is infinite.
\end{lemma}
\begin{proof}
For any pair $(m,n) \in \mathcal{P}'$, consider the corresponding polynomial $f_{m,n}$ and assume that it is reducible. Then, there is a monic linear integral polynomial such that 
$$f_{m,n}(x) = (x + a)(x^{2} + Ax +B).$$ Elementary computations show that in this case we should have 
$$a = -A, \ m = B-A^{2}, \ \text{and} \ n=-AB.$$ Since $n$ is always odd, because $(m,n) \in \mathcal{P}'$,   both $A$ and $B$ should also be odd, which implies that if $f_{m,n}$ is reducible, then $m$ must be even.

Then, for any fixed $m_{0}$ odd, Remark~\ref{rmrk:density} gives us the existence of infinitely many fundamental discriminants $D_{m_{0},n}$ and infinitely many corresponding irreducible cubic polynomials $f_{m_{0},n}$.
\end{proof}

Given the set  $\mathcal{F}$ of Lemma~\ref{lm:InfPol}, we define below the sets 
$\mathcal{P}\subseteq \mathcal{P}'$ and $\mathcal{D}\subseteq \mathcal{D}'$, which by Lemma~\ref{lm:InfPol}, are also infinite:
$$\mathcal{D} \coloneqq \{ D_{m,n} \in \mathcal{D}' \ | \ f_{m,n} \in \mathcal{F} \} \subseteq \mathcal{D}',$$
$$\mathcal{P}\coloneqq \{ (m,n) \in \mathcal{P}' \ | \ f_{m,n} \in \mathcal{F} \} \subseteq \mathcal{P}'.$$
Our cubic polynomials $f_{m,n} \in \mathcal{F}$ define a subfamily of cubic fields, let us denote them by $L_{m,n}$, and since $disc(L_{m,n}) = D_{m,n} =disc(f_{m,n})$, this subfamily contains precisely those cubic fields $L_{3}$ of Theorem~\ref{disc} which are monogenic.

From now on and throughout the paper, the letter $D$ will always correspond to a discriminant $D_{m,n} \in \mathcal{D}$, for some pair of integers $(m,n) \in \mathcal{P}$, and with corresponding irreducible polynomial $f_{m,n} \in \mathcal{F}$. Whenever it is clear, we will lose the subscript and we will denote $D_{m,n}$ only by $D$. 

We will always denote by $K_{D}$ the quadratic number field $K_{D}=\mathbb{Q}(\sqrt{D})$, and by $K_{D'} = \mathbb{Q}(\sqrt{D'})$ its quadratic resolvent, where $D' = -3D$ is also a fundamental discriminant, since $3 \nmid D$. Furthermore, we always have that
$$D' = -3D \equiv 1 \bmod 4.$$

Finally, we will say that a polynomial of the form $f_{\alpha,\beta}$ is in \emph{standard form}, if there is no integer $c$ such that $c^{2} | \alpha$ and $c^{3} | \beta$.

\subsection{Unramified cubic extensions and $3$-virtual units}\label{sec:L[mu]}
In \cite[Chapter 5]{Cohen1}, for a given prime $l$, number field $K$ and ring of integers $O_{K}$, Cohen gives the definition of the group $V_{l}(K)$ of $l$-virtual units of $K$ and proves a number of important properties. We cite below what we need, by adjusting them to our case, for $l = 3$. 

\begin{proposition} \emph{\cite[Proposition 5.2.3]{Cohen1}} \label{fact1} Let $\mu \in K^{\times}$. The following two properties are equivalent:

(1) There exists an ideal $\mathfrak{a}$ such that $\mu O_{K} = \mathfrak{a}^3$. 

(2) The element $\mu$ belongs to the group generated by the units, the cube powers of elements of $K^{\times}$ and the cube powers of nonprincipal ideals which become principal when raised to the cube power, hence they belong to the $3$-torsion part $Cl(K)[3]$ of the ideal class group $Cl(K)$ of $K$. \end{proposition}

\begin{definition}\label{def3unit}\emph{\cite[Definition 5.2.4]{Cohen1}} \\ (1) An element $\mu \in K^{\times}$ satisfying one of the two equivalent conditions of the above proposition will be called a $3$-virtual unit. \\ (2) The set of $3$-virtual units forms a multiplicative group which we denote by 
$V_{3}(K)$. \\ (3) The quotient group $Sel_{3}(K) \coloneqq V_{3}(K)/{K^{\times}}^{3}$ will be called the $3$-Selmer group of $K$. \end{definition}
The following sequence \cite[Section 5.2]{Cohen} shows the relation between the Selmer group $Sel_{3}(K) = V_{3}(K)/{K^{\times}}^{3}$ of $K$, the $3$-part of its ideal class group, and the group of units $U_{K} \subset \mathcal{O}_{K}$: 
\begin{equation}\label{Selmershort} 1 \rightarrow \frac{U_{K}}{{U_{K}}^{3}} \rightarrow Sel_{3}(K) \xrightarrow{\Phi} Cl(K)[3] \rightarrow 1. \end{equation}
We denote by $[\mu] \coloneqq \mu \bmod (K^{\times})^{3}$ the class of a $3$-virtual unit $\mu = \mathfrak{a}^{3}$ in $Sel_{3}(K)$. The map $\Phi$ sends the class $[\mu] \in Sel_{3}(K)$, to the ideal class $[\mathfrak{a}] \in Cl(K)[3]$. The pre-image of $\Phi$ is unique, up to a unit. More specifically, 
$$\Phi^{-1}([\mathfrak{a}]) = \{[\mu], \varepsilon [\mu],\varepsilon^{2} [\mu]\}, \ \text{where} \ \varepsilon \ \text{is the fundamental unit of} \ K.$$ 
Of course, if $disc(K) < -4$, there are no units other than $\pm 1$ and $\Phi$ is an isomorphism.

We recall that an ideal of $\mathcal{O}_{K}$ is called \emph{primitive} if there exists no rational integer $k\neq \pm 1$ such that every element of the ideal is a multiple of $k$. A $3$-virtual unit $\mu \in \mathcal{O}_{K}$ will be called \emph{primitive} if $\mu\mathcal{O}_{K} = \mathfrak{m}$ for some primitive $\mathcal{O}_{K}$-ideal $\mathfrak{m}$.

It is known that every cubic field $L$ of fundamental discriminant $\Delta$ arises from a $3$-virtual unit in the quadratic resolvent $K_{\Delta'} = \mathbb{Q}(\sqrt{-3\Delta})$ of the quadratic number field $K_{\Delta} = \mathbb{Q}(\sqrt{\Delta})$, contained in the Galois closure of $L$. More specifically, \cite[Lemma 4.2]{Hambleton}, every such cubic field $L$ is generated by an irreducible cubic polynomial of the form
$$f_{\mu}(x) = x^{3} - 3(\mu \overline{\mu})^{1/3}x + (\mu+\overline{\mu}) \in \mathbb{Z}[x],$$ 
where $\mu$ is a primitive $3$-virtual unit $\mu \in \mathcal{O}_{\Delta'} \setminus \mathcal{O}_{\Delta'}^{3}$, with $\mathcal{O}_{\Delta'}$ being the maximal order of $K_{\Delta'}$. Two $3$-virtual units $\mu_{1} = \mathfrak{a}_{1}^{3}$ and $\mu_{2}= \mathfrak{a}_{2}^{3}$ are generators of the same field, up to conjugation, if and only if $\mu_{1}$ or $\overline{\mu_{1}}$ is equal to $\alpha^{3} \mu_{2}$, for some $\alpha \in K_{\Delta'}^{\times}$ \cite[Theorem 4.3]{Hambleton}. Equivalently, if and only if 
the two groups $\langle [\mu_{1}] \rangle$ and $\langle [\mu_{2}] \rangle$ generated by $[\mu_{1}], [\mu_{2}] \in Sel_{3}(K_{\Delta'})$, are equal. Therefore, a subgroup $\langle [\mu] \rangle \subseteq Sel_{3}(K)$ corresponds to a unique, up to conjugation, cubic field, which we will denote by $L_{[\mu]}$. 

For fundamental discriminants $\Delta$, a classical result of Hasse \cite[Satz 7, pg.578]{Hasse} gives us that the number of non-conjugate cubic fields of discriminant $\Delta$ is equal to $$\frac{(3^{r_{3}(\Delta)}-1)}{2},$$ where $r_{3}(\Delta)$ is the rank of the $3$-part of the ideal class group of $K_{\Delta}$.

We keep in mind that $Sel_{3}(K)$ is $3$-primary. Either every subgroup in $Sel_{3}(K)$, or all but one, corresponds to a cubic field of discriminant $disc(L) = \Delta$. Whether it is the first case or the second, it depends on the ranks of the $3$-part of the ideal class groups of $K_{\Delta}$ and its quadratic resolvent $K_{\Delta'}$. For more details, the reader may refer to \cite[\S 4.5]{Hambleton}.

\section{The elliptic curves $E_{D}$}\label{EllCurvesED}
\subsection{Descent by $3$-isogeny and the Selmer groups of $E_{D}$ and $E_{D'}$.}\label{CurvesAndSelmer}
For a detailed exposition of the theory and tools of this section, the reader may refer to \cite[Chapter X.4]{Silverman} for the material related to the Selmer and Tate-Shafarevich groups, and to \cite[Appendix B]{Silverman}, \cite[Chapter IV]{CassFr}  and \cite[Chapter IV, \S3]{Neuk} for Galois Cohomology. 

For any number field $M/\mathbb{Q}$, denote by $E(M)$ the group of points of the elliptic curve $E$ defined over $M$. Let $\overline{M}$ denote the algebraic closure of $M$. 

To every discriminant $D \in \mathcal{D}$ we attach the $j$-invariant zero elliptic curve $E_{D}$ defined as 
$$E_{D}: y^2 = x^3 +16D.$$
The subgroup of order 3 of $E_{D}(\mathbb{\overline{Q}})$ invariant under the action of $G_{\mathbb{Q}} = Gal(\overline{\mathbb{Q}}/\mathbb{Q})$, is the torsion group 
$$\mathcal{T}_{D} = \{ O, (0,\pm 4 \sqrt{D}) \} \subseteq E_{D}(\overline{\mathbb{Q}}).$$Therefore, the quotient of $E_{D}$ over $\mathcal{T}_{D}$ gives a family of isogenous elliptic curves, also defined over $\mathbb{Q}$, which we denote by $E_{D'}$, with $D' = -3D$. These curves $E_{D'}$ are of the form \begin{equation}\label{isogenous}  E_{D'}: Y^2 = X^3 - 27\cdot 16D,  \end{equation}
and they also have a torsion subgroup of order 3 invariant under $G_{\mathbb{Q}}$, namely $$\mathcal{T}_{D'} =   \{ O,(0,\pm 12 \sqrt{D'}) \} \subseteq E_{D'}(\overline{\mathbb{Q}}).$$
We remark that the subscripts $D$ and $D'=-3D$ in our notation for $E_{D}$ and $E_{D'}$, are consistent with the definition for the general equation of an elliptic curve with $3$-isogeny, as given for example in \cite[\S 8.4.2, pg. 558]{Cohen}. The discriminants $D$ and $D'$ constitute the squarefree part of the constant term of their respective equations.

We define by $\lambda$ the rational 3-isogeny $$\lambda: E_{D} \rightarrow E_{D'} $$ with kernel $ker_{\lambda}(\mathbb{\overline{Q}}) = \mathcal{T}_{D}$, given by the quotient map:
$$X = (y^2 +3\cdot16D)x^{-2} \ \text{and} \ Y = y(x^3-8\cdot16D)x^{-3} .$$
The dual isogeny $\lambda'$ is such that $\lambda \lambda' = \times 3$, where $\times 3$ is the multiplication-by-3 map. Since both $ker_{\lambda}(\mathbb{\overline{Q}})$ and $ker_{\lambda'}(\mathbb{\overline{Q}})$ contain no non-trivial rational point of order 3, we have that the rank of $E_{D}(\mathbb{Q})$ is \begin{equation}\label{R}rank(E_{D}(\mathbb{Q})) = dim_{\mathbb{F}_{3}}(E_{D}(\mathbb{Q})/3E_{D}(\mathbb{Q})).\end{equation} 
Consider the short exact sequence, \cite[Section X.4, Remark 4.7]{Silverman}: 
\begin{equation} \label{splitrank} \small 0 \rightarrow E_{D'}(\mathbb{Q})/\lambda(E_{D}(\mathbb{Q})) \xrightarrow{\lambda'} E_{D}(\mathbb{Q})/3E_{D}(\mathbb{Q}) \rightarrow E_{D}(\mathbb{Q})/\lambda'(E_{D'}(\mathbb{Q})) \rightarrow 0 .\end{equation} 
Given fact (\ref{R}) above, to compute the rank of $E_{D}$ it suffices to compute the dimensions of the quotients $E_{D'}(\mathbb{Q})/\lambda(E_{D}(\mathbb{Q}))$ and $E_{D}(\mathbb{Q})/\lambda'(E_{D'}(\mathbb{Q}))$.\normalsize
 
Consider now the short exact sequence \begin{equation}\label{lisogeny} 0 \rightarrow \mathcal{T}_{D} \rightarrow E_{D}(\mathbb{\overline{Q}}) \xrightarrow{\lambda} E_{D'}(\mathbb{\overline{Q}}) \rightarrow 0.\end{equation} From this, we obtain the long exact cohomology sequence which gives in particular the following 
\begin{equation}\label{quotient} 0 \rightarrow E_{D'}(\mathbb{Q})/\lambda(E_{D}(\mathbb{Q}))  \xrightarrow{\delta}  H^{1}(G_{\mathbb{Q}},\mathcal{T}_{D}) \rightarrow H^{1}(G_{\mathbb{Q}}, E_{D}(\mathbb{\overline{Q}}))[\lambda] \rightarrow 0.\end{equation} 
By localising at each place $p$ of $\mathbb{Q}$, we obtain the following commutative diagram, where $res_{p}$ is the usual restriction map: 
\small
\begin{center}
\begin{tikzcd}[ampersand replacement=\&, column sep=tiny]
        0 \arrow[r] \&  E_{D'}(\mathbb{Q})/\lambda(E_{D}(\mathbb{Q})) \arrow[r, "\delta"] \arrow[d]  \& H^{1}(G_{\mathbb{Q}},\mathcal{T}_{D}) \arrow[r]  \arrow[d, "\underset{p}{\prod}res_{p}"] \& H^{1}(G_{\mathbb{Q}}, E_{D}(\mathbb{\overline{Q}}))[\lambda]  \arrow[r] \arrow[d, "\underset{p}{\prod}res_{p}"]  \& 0 \\  
        0 \arrow[r] \&  \underset{p}{\prod}E_{D'}(\mathbb{Q}_{p})/\lambda(E_{D}(\mathbb{Q}_{p})) \arrow[r]  \& \underset{p}{\prod}H^{1}(G_{\mathbb{Q}_{p}},\mathcal{T}_{D}) \arrow[r]  \& \underset{p}{\prod}H^{1}(G_{\mathbb{Q}_{p}}, E_{D}(\mathbb{\overline{Q}}_{p}))[\lambda]  \arrow[r]  \& 0 
    \end{tikzcd}
\end{center}    
\normalsize

\begin{definition}\label{definition} The Selmer group of $E_{D}$ relative to the isogeny $\lambda$ is 
$$\mathcal{S}_{\lambda} = \{ \xi \in H^{1}(G_{\mathbb{Q}},\mathcal{T}_{D}) \ | \ res_{p}(\xi) \in Im( E_{D'}(\mathbb{Q}_{p})/\lambda(E_{D}(\mathbb{Q}_{p}))) \ \text{for every} \ p\}.$$

The Tate-Shafarevich group of $E_{D}$ can now be defined as $$\Sha(E_{D}) = \{\xi \in H^{1}(G_{\mathbb{Q}}, E_{D}(\mathbb{\overline{Q}})) \  | \ res_{p}(\xi) = 0 \ \text{for every} \ p\}.$$

These two groups are connected together as follows: 
\begin{equation} \label{selsha} 0 \rightarrow E_{D'}(\mathbb{Q})/\lambda(E_{D}(\mathbb{Q})) \rightarrow \mathcal{S}_{\lambda} \rightarrow \Sha(E_{D})[\lambda] \rightarrow 0.\end{equation} \end{definition}

\begin{remark}\label{dualisogenydefn} By considering the dual isogeny $\lambda'$ instead, we get exact sequences analogous to (\ref{lisogeny}) and (\ref{quotient}) which in turn give us the analogous definitions for
$\mathcal{S}_{\lambda'}$,  $\Sha(E_{D'})$ and $\Sha(E_{D'})[\lambda']$. \end{remark}

To study the quotient groups $E_{D'}(\mathbb{Q})/\lambda(E_{D}(\mathbb{Q}))$ and $E_{D}(\mathbb{Q})/\lambda'(E_{D'}(\mathbb{Q}))$, an obvious approach is to study $H^{1}(G_{\mathbb{Q}},\mathcal{T}_{D})$ and $H^{1}(G_{\mathbb{Q}},\mathcal{T}_{D'})$, which contain isomorphic copies of them. We will discuss here the results of Satg\'e \cite{Satge}, which are related to $H^{1}(G_{\mathbb{Q}},\mathcal{T}_{D})$ and which will be of use for us.

Let $G_{K_{D}} = Gal (\mathbb{\overline{Q}}/K_{D})$. Since $G_{K_{D}}$ acts trivially on $\mathcal{T}_{D}$, the inflation-restriction sequence gives: 
\small 
\begin{equation}\label{long2} 0 \rightarrow H^{1}(Gal(K_{D}/\mathbb{Q}), {\mathcal{T}_{D}}^{G_{K_{D}}}) \xrightarrow{inf} H^{1}(G_{\mathbb{Q}},\mathcal{T}_{D})\end{equation}
$$ \xrightarrow{res} H^{1}(G_{K_{D}},\mathcal{T}_{D})^{Gal(K_{D}/\mathbb{Q})}  \rightarrow  H^{2}(Gal(K_{D}/\mathbb{Q}),{\mathcal{T}_{D}}^{G_{K_{D}}}).$$
\normalsize
Since $|Gal(K_{D}/\mathbb{Q})| =2$ and $|{\mathcal{T}_{D}}^{G_{K_{D}}}| = |\mathcal{T}_{D}| = 3$ we must have that the first term of the cohomology sequence (\ref{long2}) is trivial. For the same reason, the $H^{2}$-term is also trivial and we therefore obtain the isomorphism 
$$H^{1}(G_{\mathbb{Q}},\mathcal{T}_{D}) \cong H^{1}(G_{K_{D}}, \mathcal{T}_{D}).$$ 

\begin{fact}\label{correspondence}It is shown by Satg\'e in \cite[Lemma 1.3] {Satge} that $H^{1}(G_{\mathbb{Q}},\mathcal{T}_{D})$ identifies with a subgroup of $Hom(G_{K_{D}},\mathbb{Z}/3\mathbb{Z})$. In particular, the kernel of every homomorphism $\xi$ in this subgroup fixes a cubic extension of $K_{D}$, which is non-abelian and Galois, over $\mathbb{Q}$. We add that therefore, the Galois group of this extension, over $\mathbb{Q}$, is isomorphic to $\mathbb{S}_{3}$.\end{fact}

Denote by $d(\mathcal{S}_{\lambda})$ and $d(\mathcal{S}_{\lambda'})$ the rank of the Selmer groups $\mathcal{S}_{\lambda}$ and $\mathcal{S}_{\lambda'}$, relative to the isogenies $\lambda$ and $\lambda'$, of the curves $E_{D}$ and $E_{D'}$ respectively, over $\mathbb{Q}$. Let $Cl(K_{D})_{3}$ denote the $3$-Sylow subgroup of the ideal class group $Cl(K_{D})$ of $K_{D}$. We denote by $r_{3}(D)$ the rank of the $3$-part of the ideal class group
$$r_{3}(D) \coloneqq dim_{\mathbb{F}_{3}}(Cl(K_{D})_{3}/Cl(K_{D})_{3}^{3}).$$ 
From \cite[Section 3]{Satge}, we can compute the precise rank for the Selmer groups $\mathcal{S}_{\lambda}$ and $\mathcal{S}_{\lambda'}$. 
\begin{proposition}\label{Selmer} The rank of the Selmer groups $\mathcal{S}_{\lambda}$ and $\mathcal{S}_{\lambda'}$ of the curves $E_{D}$ and $E_{D'}$ are as follows: 
$$d(\mathcal{S}_{\lambda}) = d(\mathcal{S}_{\lambda'}) = r_{3}(D), \ \ \ \ \text{if $D < -4$}$$ 
$$d(\mathcal{S}_{\lambda}) = r_{3}(D) \ \text{and} \ d(\mathcal{S}_{\lambda'}) = r_{3}(D) + 1, \ \ \ \  \text{if $D > 0$}.$$ 
\end{proposition}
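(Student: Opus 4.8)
\emph{Strategy.} The plan is to read both ranks off the cohomological set-up of this section: for $\mathcal S_{\lambda}$ one uses Fact~\ref{correspondence} to turn it into a $3$-rank of a class group, and for $\mathcal S_{\lambda'}$ one compares the descents by $\lambda$ and by its dual $\lambda'$. In substance this is the specialisation to our family (the case $a=b=c=0$) of the computations in \cite[Section~3]{Satge}.

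\emph{The group $\mathcal S_{\lambda}$.} Under the identification (\ref{SatgeDescr}) and Fact~\ref{correspondence}, $\mathcal S_{\lambda}$ is realised inside $K_{D}^{*}/(K_{D}^{*})^{3}$ as the set of classes $[\alpha]$ with $N_{K_{D}/\mathbb Q}(\alpha)\in(\mathbb Q^{*})^{3}$ — the condition making $K_{D}(\sqrt[3]{\alpha})/\mathbb Q$ normal and non-abelian — and with $K_{D}(\sqrt[3]{\alpha})/K_{D}$ unramified above $3$. This last condition, together with $3\nmid D$ (which controls the primes above $3$), forces $(\alpha)=\mathfrak a^{3}$ for a fractional ideal $\mathfrak a$, so $[\alpha]\mapsto[\mathfrak a]$ defines a homomorphism $\mathcal S_{\lambda}\to\mathcal{CL}(K_{D})[3]$. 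I would show this homomorphism has trivial kernel and is surjective, hence an isomorphism, giving $d(\mathcal S_{\lambda})=\dim_{\mathbb F_{3}}\mathcal{CL}(K_{D})[3]=r_{3}(D)$. Triviality of the kernel is the crux: the kernel is represented by units of $\mathcal O_{K_{D}}$ modulo cubes subject to the two conditions above, and it vanishes because $\mathcal O_{K_{D}}^{*}=\{\pm1\}\subset(K_{D}^{*})^{3}$ when $D<-4$, while for $D>0$ the fundamental unit — though not a global cube — is not a cube in the completions of $K_{D}$ at the primes above $3$ and so does not yield an extension unramified there. Surjectivity follows by lifting a $3$-torsion class $[\mathfrak a]$ to a generator $\alpha$ of $\mathfrak a^{3}$: then $\alpha\bar\alpha=\pm N(\mathfrak a)^{3}$ gives the norm condition, and one checks that such an $\alpha$ is unramified above $3$ (again using $3\nmid D$). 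This is exactly the content of \cite[Section~3]{Satge} for $a=b=c=0$.

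\emph{The group $\mathcal S_{\lambda'}$.} Rather than repeating the argument over $K_{D'}=\mathbb Q(\sqrt{-3D})$ — which would output $r_{3}(-3D)$ and then require Scholz's reflection theorem to be brought back to $r_{3}(D)$ — I would compare $\mathcal S_{\lambda}$ and $\mathcal S_{\lambda'}$ directly. Since $\ker\lambda'$ is the Cartier dual of $\mathcal T_{D}=\ker\lambda$, the images of the local Kummer maps for $\lambda$ and for $\lambda'$ are exact orthogonal complements under local Tate duality at every place, so the Greenberg--Wiles formula gives
\[
d(\mathcal S_{\lambda})-d(\mathcal S_{\lambda'})=\sum_{v}\Big(\dim_{\mathbb F_{3}}\tfrac{E_{D'}(\mathbb Q_{v})}{\lambda(E_{D}(\mathbb Q_{v}))}-\dim_{\mathbb F_{3}}\mathcal T_{D}(\mathbb Q_{v})\Big),
\]
the global terms dropping out because $D$ squarefree forces both $\mathcal T_{D}(\mathbb Q)$ and $\mathcal T_{D'}(\mathbb Q)$ to be trivial. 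The $v$-summand vanishes at every finite place of good reduction with $v\neq3$, and a local computation at $v=2$, $v=3$ and $v\mid D$ — of exactly the type made in \cite[Section~3]{Satge}, using that at $v\mid D$ both curves have Kodaira type $\mathrm{II}$ and that $3\nmid D$ — shows the total contribution of the finite places is $0$. Finally, since $\Delta(E_{D})$ and $\Delta(E_{D'})$ are negative, $E_{D}(\mathbb R)$ and $E_{D'}(\mathbb R)$ are connected, so $\lambda$ is surjective on real points and the archimedean summand equals $-\dim_{\mathbb F_{3}}\mathcal T_{D}(\mathbb R)$, which is $-1$ if $D>0$ (the point $(0,4\sqrt{D})$ being real) and $0$ if $D<0$. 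Hence $d(\mathcal S_{\lambda'})=d(\mathcal S_{\lambda})=r_{3}(D)$ for $D<-4$ and $d(\mathcal S_{\lambda'})=d(\mathcal S_{\lambda})+1=r_{3}(D)+1$ for $D>0$.

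\emph{Main obstacle.} The genuinely delicate step is everything happening at the prime $3$: the vanishing of the unit (and $3$-supported) part of the kernel of $\mathcal S_{\lambda}\to\mathcal{CL}(K_{D})[3]$, and the evaluation of the $v=3$ contribution in the comparison, both reduce to deciding when a generator of $\mathfrak a^{3}$, or the fundamental unit of $K_{D}$, is a cube in the completions of $K_{D}$ above $3$, which depends on the splitting of $3$ in $K_{D}$ and on congruences for $D$ modulo powers of $3$. Everything else — the exact cohomology sequences, the class-field-theoretic identification of the image with $\mathcal{CL}(K_{D})[3]$, the Kodaira-type bookkeeping at $2$ and at the primes dividing $D$, and the real-points count — is routine, so in practice the proof amounts to checking that our $E_{D}$ is an instance of the case $a=b=c=0$ of \cite{Satge} and transcribing the resulting formulas.
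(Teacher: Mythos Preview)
Your outline is correct as a strategy, but it takes a considerably longer route than the paper and, in the process, skips the one concrete computation that the paper's proof actually consists of.

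The paper's argument is two lines: since $D$ is squarefree (hence odd) one has $2^{4}\Vert 16D$, so Satg\'e's Lemma~3.1 is vacuous; and $16D=64m^{3}-432n^{2}\equiv(4m)^{3}\equiv\pm1\pmod 9$ because $3\nmid m$, which places $E_{D}$ in the case of \cite[Propositions~3.2 and~3.3]{Satge} that outputs exactly the stated values of $d(\mathcal S_{\lambda})$ and $d(\mathcal S_{\lambda'})$. That is the entire proof.

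Your treatment of $\mathcal S_{\lambda}$ is essentially a sketch of Satg\'e's own argument rather than an application of it. It is fine, but the step you flag as delicate --- that for $D>0$ the fundamental unit is never a local cube above~$3$ --- is not something you can assert in general; it holds here precisely because of the congruence $16D\equiv\pm1\pmod 9$, which you never check. The paper's proof isolates exactly this check and nothing else.

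Your treatment of $\mathcal S_{\lambda'}$ via the Greenberg--Wiles/Cassels comparison is a genuinely different route from both the paper and Satg\'e (who computes $\mathcal S_{\lambda'}$ over $K_{D'}$ and invokes Scholz reflection). The duality argument is valid and the archimedean bookkeeping is correct, but note that the vanishing of the $v=3$ term again comes down to the same $\pmod 9$ condition on $16D$; so the ``main obstacle'' you identify collapses to the one-line congruence the paper writes down. Your approach buys a conceptual explanation of the $+1$ discrepancy for $D>0$ (it is the real $3$-torsion point), at the cost of several local computations the paper avoids by citing Satg\'e wholesale.
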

\begin{proof} Our elliptic curves $E_{D}$ have $16D = 16(4m^3-27n^2)$ and with $D$ squarefree we have that $2^4 || 16D$. Therefore, Lemma 3.1 in \cite{Satge} is vacuously true. We also see that $16D \equiv (4m)^3 \equiv \pm 1~({\rm mod}~9)$, since $3 \nmid m$ by assumption. Therefore, Propositions 3.2 and 3.3 in \cite{Satge} give us the desired result. \end{proof}

\begin{remark}\label{correspondence1} Given Fact~\ref{correspondence} and Proposition~\ref{Selmer}, we see that for our family of curves $E_{D}$ (where $a = b = c = 0$ following the notation of \cite[Definition 1.12]{Satge}), each $\xi \in H^{1}(G_{\mathbb{Q}},\mathcal{T}_{D})$ belongs to the Selmer group $\mathcal{S}_{\lambda}$ of the curves $E_{D}$ over $\mathbb{Q}$, if and only if the cubic extension of $K_{D}$ fixed by the kernel of $\xi$ is unramified. Therefore, from Fact~\ref{correspondence} and Theorem~\ref{disc}, we conclude that the corresponding cubic fields are of discriminant exactly $D$. \end{remark}

Together with the discussion in Section~\ref{sec:L[mu]}, we have established the following bijections:

\begin{bijection}\label{Bij:CocyclesTriples} There is a bijection between the cocycle classes $[\xi]$ in $\mathcal{S}_{\lambda}$ and the classes of $3$-virtual units $[\mu] \in Sel_{3}(K_{D'})$ whose corresponding cubic polynomial $f_{\mu}$ defines an unramified cubic extension of $K_{D}$ and hence, $f_{\mu}$ generates a cubic field of discriminant $D$. \end{bijection}

\begin{bijection}\label{Bij:CocyclesFields}
There is a bijection between the groups $\langle [\xi] \rangle \subseteq \mathcal{S}_{\lambda}$, and the groups $\langle [\mu] \rangle \subseteq Sel_{3}(K_{D'})$ that give rise to the cubic fields $L_{[\mu]}$, unique up to conjugation, of discriminant $D$. 
\end{bijection}

\subsection{The Fundamental $3$-Descent Map}
In \cite[Section 8.4.2]{Cohen}, Cohen defines the Fundamental $3$-descent map from $E_{D}(\mathbb{Q})$ to the multiplicative group $K_{D}^{\times}/(K_{D}^{\times})^{3}$ where, by the explicit definition of the isogeny $\lambda$, each point $P = (x_{P},y_{P}) \in E_{D}(\mathbb{Q})$ is mapped to $(y_{P} + 4\sqrt{D}) \cdot {K_{D}^{\times}}^{3}$. This map is a group homomorphism with kernel equal to $\lambda'(E_{D'}(\mathbb{Q}))$. Similarly, for the dual isogeny $\lambda'$, each point $P \in E_{D'}(\mathbb{Q})$ is mapped to $(y_{P} + 12\sqrt{D}) \cdot {K_{D}^{\times}}^{3}$ and the group homomorphism from $E_{D'}(\mathbb{Q})$ to the multiplicative group $K_{D'}^{\times}/(K_{D'}^{\times})^{3}$ has kernel $\lambda(E_{D}(\mathbb{Q}))$. Given these $3$-descent maps, let us call them $\psi$ and $\psi'$, we obtain the injective group homomorphisms $\Psi$ and $\Psi'$, which we will also call \emph{Fundamental $3$-descent maps}:
\begin{equation}\label{quotient2} E_{D}(\mathbb{Q})/\lambda'(E_{D'}(\mathbb{Q})) \xrightarrow{\Psi} Ker(N_{K_{D}}: K_{D}^{\times}/ {K_{D}^{\times}}^{3} \rightarrow  \mathbb{Q}^{\times}/ {\mathbb{Q}^{\times}}^{3}),\end{equation} $$(x , y) \mapsto (y + 4\sqrt{D}) \cdot {K_{D}^{\times}}^{3}$$ 
and 
\begin{equation} E_{D'}(\mathbb{Q})/\lambda(E_{D}(\mathbb{Q})) \xrightarrow{\Psi'} Ker(N_{K_{D'}} : K_{D'}^{\times}/ {K_{D'}^{\times}}^{3} \rightarrow  \mathbb{Q}^{\times}/ {\mathbb{Q}^{\times}}^{3}),\label{quotient1} \end{equation}
$$(X , Y) \mapsto (Y + 12\sqrt{D'}) \cdot {K_{D'}^{\times}}^{3}.$$
The way these two quotient groups fit together in a short exact sequence, is shown in (\ref{splitrank}). 

We have the following, immediate corollary.

\begin{corollary}\label{cor:ranks}
The ranks of 
$$E_{D'}(\mathbb{Q}) / \lambda(E_{D}(\mathbb{Q})) \ \text{and} \  \Psi'(E_{D'}(\mathbb{Q}) / \lambda(E_{D}(\mathbb{Q}))),$$ 
as $\mathbb{F}_{3}$-vector spaces, are equal.
\end{corollary}
\begin{proof}
We observe that $\Psi'(E_{D'}(\mathbb{Q}) / \lambda(E_{D}(\mathbb{Q}))) \subseteq Sel_{3}(K_{D'})$, and the result now follows directly from the fact that the map $\Psi'$ induces an isomorphism between these two $\mathbb{F}_{3}$-vector spaces.
\end{proof}

From the definition of $D = D_{m,n}$, the curve $E_{D'}(\mathbb{Q})$ has the integral point $P_{m,n} = (12m,108n)$.

\begin{lemma}\label{Pmn}
The integral point $P_{m,n}=(12m,108n)$ belongs to 
$$P_{m,n} \in E_{D'}(\mathbb{Q}) \setminus \lambda(E_{D}(\mathbb{Q})).$$
\end{lemma}
\begin{proof}
Define the $K_{D'}$-integral element 
$$\mu_{m,n}=\frac{27n+3\sqrt{D'}}{2} \in \mathcal{O}_{D'}.$$ 
We compute its trace and norm  
$$\text{Norm}_{K_{D'}/\mathbb{Q}}(\mu_{m,n}) = (3m)^{3},$$
$$\text{Trace}_{K_{D'}/\mathbb{Q}}(\mu_{m,n}) = 27n.$$
As usual, we form the corresponding cubic polynomial
\begin{equation}\begin{gathered}\label{eqn:thePoly}f_{\mu_{m,n}}(x) = x^{3} - 3(\text{Norm}(\mu_{m,n}))^{1/3}x + \text{Trace}(\mu_{m,n}) = \\ x^{3} - 3(3m)x + 27n = x^{3}-Mx+N.\end{gathered}\end{equation}
Since $3^{2}|M$ and $3^{3}|N$, we consider the polynomial $f_{m,n}$ below, which is in standard form:
$$f_{m,n}(x) = x^{3} - mx + n.$$ 
Since $f_{m,n}$ is irreducible (by assumption, since $D \in \mathcal{D}$), this implies that $\mu_{m,n}$ is not a cube in $K_{D'}^{\times}$. This can be explicitly verified as follows: 

Assume by way of contradiction that $\mu = a^{3}, a \in K_{D'}^{\times}$. Then 
$$-\text{Trace}_{K_{D'}/\mathbb{Q}}(a) = a + \overline{a} \in \mathbb{Z} \ \text{and} \ \text{Norm}_{K_{D'}/\mathbb{Q}}(a) = (\mu\overline{\mu})^{1/3} = 3m.$$ Then one can immediately see that $-\text{Trace}_{K_{D'}/\mathbb{Q}}(a)$ is an integral root of $f_{\mu_{m,n}}(x)$ of Equation~(\ref{eqn:thePoly}), which in turn gives us that
\begin{equation}\begin{gathered}\label{eqn:1}0 = f_{\mu_{m,n}}(-\text{Trace}_{K_{D'}/\mathbb{Q}}(a)) = -(a + \overline{a})^{3}+9m(a + \overline{a})+27n \\
\iff \\
0 = \Big{(}\frac{-(a + \overline{a})}{3}\Big{)}^{3} - m\Big{(}\frac{-(a + \overline{a})}{3}\Big{)}+n = f_{m,n}\Big{(}\frac{-(a + \overline{a})}{3}\Big{)}.\end{gathered}\end{equation}
We also see that 
\begin{equation}\begin{gathered}\label{eqn:2} 27n = \mu+\overline{\mu} = a^{3}+\overline{a}^{3} = (a+\overline{a})^{3}-3a\overline{a}(a+\overline{a}) \\
\Rightarrow \frac{-(a + \overline{a})}{3} \in \mathbb{Z}.\end{gathered}\end{equation}
Equations~(\ref{eqn:1} \& \ref{eqn:2}) now imply that $f_{m,n}$ has an integral root, which is a contradiction since the corresponding discriminant $D_{m,n}$ belongs to the set $\mathcal{D}$.

Let us look now at the image of $P_{m,n}$ under the Fundamental $3$-descent map:
$$\Psi'(P_{m,n}) = 108n+12\sqrt{D'} \equiv \Big{(}\frac{1}{2}\Big{)}^{3}(108n+12\sqrt{D'}) = \mu_{m,n} \bmod (K_{D'}^{\times})^{3}.$$ 
Since, as we have just seen, $\mu_{m,n}$ is not a cube in $K_{D'}^{\times}$, this implies that $P_{m,n}$ is not in the kernel of $\Psi'$.
\end{proof}

We conclude this section with the following lemma.

\begin{lemma}\label{lem:IndependencePnts}
    Two points $P, Q \in E_{D'}(\mathbb{Q})\setminus \lambda(E_{D}(\mathbb{Q}))$ satisfy $P = kQ$, for some integer $k$, if and only if their corresponding cubic polynomials $f_{P}$ and $f_{Q}$ define the same cubic field, under conjugation. 
\end{lemma}
\begin{proof}
   We have $P, Q \in E_{D'}(\mathbb{Q})\setminus \lambda(E_{D}(\mathbb{Q}))$ with $P = kQ$, 
   
   if and only if $P \equiv rQ + \lambda(E_{D}(\mathbb{Q}))$ where $k=3k' + r$ and $r = 1,2$, 
   
   if and only if under the Fundamental $3$-descent Map $\Psi'$, the corresponding groups $\langle [\mu_{P}] \rangle$ and $\langle [\mu_{Q}] \rangle$ are equal, 
   
   if and only if $L_{[\mu_{P}]} = L_{[\mu_{Q}]}$. 
\end{proof}

\subsection{Binary Cubic Forms: Known Facts and Definitions} \label{subsec:BCF} For a more detailed exposition of the theory and facts of this section, the reader may refer to \cite{Bhar1}, \cite{Bhar2}, \cite{Cohen1} and \cite{Hambleton}.

We will always consider here the irreducible cubic polynomials 
$$f(x)=ax^{3}+bx^{2}+cx+d \in \mathbb{Z}[x],$$ whose splitting field has Galois group isomorphic to the Symmetric group $\mathbb{S}_{3}$, and whose discriminant equals the discriminant of the cubic field they define. Each such irreducible cubic polynomial, defines a class of conjugate cubic fields 
$$\{ L_{i} \}_{0\leq i \leq 2} =  \{\mathbb{Q}(\delta_{i})\}_{0\leq i\leq2}, \ \text{where} \ f(\delta_{i})=0 \ \text{for all} \ 0\leq i \leq 2,$$
and in our case, $disc(f) = disc(L_{i}), \ \text{for all} \ 0 \leq i \leq 2.$ Unless otherwise stated, the cubic field $L_{f}$ will refer to any one of the cubic fields $L_{i}$. 

By `homogenizing' $f$, we obtain the irreducible integral binary cubic form 
$$F(x,y) = ax^{3}+bx^{2}y+cxy^{2}+dy^{3},$$ which for brevity we will also denote by $$F = [a,b,c,d].$$ 
The discriminant of $F$ is given by the formula 
$$disc(F) = disc(f) = b^{2}c^{2}+18abcd-27a^{2}d^{2}-4ac^{3}-4b^{3}d.$$ 
We remind the reader that a binary cubic form is called \emph{irreducible} if and only if $F(x,1)$ is an irreducible polynomial over $\mathbb{Q}$.

Given any matrix 
$$M = \begin{pmatrix} 
	\alpha & \beta \\
	\gamma & \delta \\
	\end{pmatrix}  \in GL(2,\mathbb{Q}),$$ 
 it is straightforward to see that 
\begin{equation}\label{SimpleAction} 
disc(MF(x,y)) \coloneqq disc(F(\alpha x+\beta y,\gamma x+ \delta y)) = det(M)^{6}disc(F(x,y)). 
\end{equation} 
We deduce that the discriminant of a binary cubic form is $GL(2,\mathbb{Z})$-invariant, and therefore $SL(2,\mathbb{Z})$-invariant. 

We denote by $[F]$ the $SL(2,\mathbb{Z})$-equivalence class of the irreducible integral binary cubic forms $F$, of discriminant $disc(F)$. Under the Davenport-Heilbronn correspondence \cite{DavHeil} and \cite{DavHeil2}, every such $SL(2,\mathbb{Z})$-equivalence class $[F]$ corresponds to a triple of conjugate cubic fields $\mathbb{Q}(\delta_{i})$ of discriminant $disc(F)$ and with $F(\delta_{i},1) = f(\delta_{i}) = 0$, where $F$ is the `homogenization' of $f$; i.e. $f(x) = F(x,1)$. 

For simplicity, and where it is clear, $L$ will refer to the cubic field $L_{f} \equiv L_{F}$. Following the terminology in \cite{Hambleton}, we will call any irreducible integral binary cubic form $F$ with discriminant $disc(F) = disc(L)$, an \emph{Index Form} for $L$. We will denote the equivalence class of Index Forms of a cubic field $L$, by $IF_{L}(disc(L))$. Every $F \in IF_{L}(disc(L))$ corresponds to the maximal order $\mathcal{O}_{L}$ of $L$, under the Delone-Fadeev correspondence. 

It is well known that given any Index Form, there is a known algorithm that reduces it to the \emph{Reduced Index Form} of the same cubic field $L$. Each $SL(2,\mathbb{Z})$-equivalence class $[F]$ contains a \emph{unique} Reduced Index Form; see for example \cite[\S 3.2 \& \S3.3]{Hambleton} and \cite{Belabas}. For brevity, we will denote the Reduced Index Form of $IF_{L}(disc(L))$ by $RIF_{L}$. As we see below, the definition of the reduced form differs slightly, depending on whether the discriminant is positive or negative. Before giving the definition, we remind the reader of the Hessian $H$ and the Jacobian $J$ associated with a binary cubic form $F$.

\begin{definition}\label{defn:HessJac}
Let $F=[a,b,c,d]$ be an integral binary cubic form. The Hessian of $F$ is the binary quadratic form defined as
$$H=[A,B,C]=[b^{2}-3ac, bc-9ad, c^{2}-3bd]$$ and the Jacobian of $F$ is another binary cubic form defined as
$$J=[3aB-2Ab, 9aC-3Ac, 3bC-9Ad, 2cC-3Bd].$$
\end{definition}

\begin{definition}\label{defn:RIF} Let $F=[a,b,c,d]$ be an integral binary cubic form.
\begin{enumerate}[label=(\Alph*)]
\item If $F$ is of positive discriminant $\Delta$, the Hessian has negative discriminant equal to $B^{2}-4AC = -3\Delta$, and we say that $F$ is reduced if the following conditions hold:
\begin{enumerate}
\item $|B| \leq A \leq C$.
\item $a > 0, b \geq 0$.
\item If $b = 0$, then $d<0$.
\item If $A=B$,then $b<|3a-b|$.
\item If $A=C$,then $a<|d|$,or $a=|d|$ and $b<|c|$.
\end{enumerate}
\item If $F$ is of negative discriminant $\Delta$, let $\delta \in \mathbb{R}$ satisfy $F(\delta,1)=0$, $\rho_{1}=a\delta$ and $\rho_{2}=a\delta^{2}+b\delta$.
Let $\overline{A} = a, \overline{B} = b+\rho_{1}$, and $\overline{C} = c+\rho_{2}$. We say that $F$ is reduced if 
$$|\overline{B}| < \overline A < \overline C,$$ and either $b > 0$ or $b=0$ and $d>0$.
\end{enumerate}
\end{definition}

The Hessian $H$ and the Jacobian $J$ of a binary cubic form $F$ are known as the quadratic, and respectively the cubic, covariants of $F$ since these forms are covariant with respect to the action of $GL(2,\mathbb{Z})$. Specifically
\begin{equation}\label{reln:covariant}
    H_{MF} = MH_{F} \ \text{and} \ J_{MF} = MJ_{F}, \ \forall M \in GL(2,\mathbb{Z}),
\end{equation}
where the action of $M$ on $H$ and $J$ is defined the same way as we defined the action of $M$ on $F$ in (\ref{SimpleAction}) above.

Given an integral binary cubic form $F$ of discriminant $\Delta$, its quadratic and cubic covariants $H$ and $J$, satisfy the following equality, known as \emph{Cayley's Syzygy}:  
\begin{equation}\label{CayleySyzygy}
    J(x,y)^{2} = 4H(x,y)^{3} - 27\Delta F(x,y)^{2}.
\end{equation}
We see that if $F$ has a point $P=(x_{P},y_{P}) \in \mathbb{Q}^{2}$ such that $F(x_{P},y_{P})=1$, then this point can be easily transformed into a rational point on our elliptic curve $E_{D'}(\mathbb{Q})$. 

We will discuss Cayley's Syzygy further, later in the paper, in the context of Homogeneous Spaces for our elliptic curves, that we introduce in Section~\ref{Torsors}.

\subsection{The group of rational points $E_{D'}(\mathbb{Q})$.}
In Lemma~\ref{PointForm} below, the first statement regarding the form of every point $P \in E_{D'}(\mathbb{Q})$ is already proved in \cite[pp.71-72]{SilvTate}, for arbitrary curves of the form $y^{2}=x^{3}+ax^{2}+bx+c$. We include our proof of the first statement here, for completeness. The second statement of the lemma is specific to the curves $E_{D'}$. 

\begin{lemma}\label{PointForm}
Every point $P \in E_{D'}(\mathbb{Q})$ is of the form $P=(x/z^{2},y/z^{3})$ for some $x, y \in \mathbb{Z}^{\times}$ and $z \in \mathbb{Z}^{+}$, with $\text{gcd}(x,z) = \text{gcd}(y,z) = 1$. Furthermore, $d \coloneqq \text{gcd}(x,y) | 6$ and if $2 | d$, then $4 || y$ and $4 | x$, and if $3 | d$, then $3 || x$ and $9 | y$. 
\end{lemma}
\begin{proof} Let us write $P=(x/z_{1},y/z_{2})$, with $z_{1}, z_{2}, x, y \in \mathbb{Z}$. Without loss of generality, we may assume that $z_{1}, z_{2}$ are always positive and that $\text{gcd}(x,z_{1}) = \text{gcd}(y,z_{2}) = 1$, with $x$ and $y$ arbitrary integers. Plugging $P$ into the equation of $E_{D'}$ we obtain
\begin{equation} \label{Lemma:Points}
 x^{3}z_{2}^{2} - y^{2}z_{1}^{3} = 27\cdot16z_{1}^{3}z_{2}^{2}D.\end{equation}
 Since $\text{gcd}(x,z_{1}) = \text{gcd}(y,z_{2}) = 1$, we must have that
 $$z_{1}^{3} | z_{2}^{2}, \ \text{and} \ z_{2}^{2} | z_{1}^{3}.$$
 Since $z_{1}, z_{2}$ are both positive integers, we conclude that there is a positive integer $z$ such that 
 \begin{equation}\label{reln:xyz}z_{1} = z^{2}, \ z_{2}=z^{3}, \ \text{gcd}(xy,z) = 1.\end{equation} 
 
Equation(\ref{Lemma:Points}) now becomes
\begin{equation} \label{Lemma:Points2}
 x^{3} - y^{2} = 27\cdot16Dz^{6}.\end{equation} 
 
Let $d = \text{gcd}(x,y)$ denote the greatest common divisor of $x$ and $y$. Then, from relation(\ref{reln:xyz}), we should have that $\text{gcd}(d,z) = 1$. From equation(\ref{Lemma:Points2}) and from the fact that $D$ is squarefree and $6 \nmid D$, we conclude that $d | 6$ and it is now straightforward to see that: 
\begin{equation}\label{CasesDiv} \begin{cases}\text{if $2 | d$, then $4 || y$ and $4 | x$,} \\ 
\text{if $3 | d$, then $3 || x$ and $27 | y$.}\end{cases}\end{equation}  
\end{proof}

\begin{lemma}\label{trinomials} Every cubic polynomial $f(x) = x^{3}+ax^{2}+bx+c$, with $a, b, c \in \mathbb{Z}$, can be written in the form $F_{m,n}(X) = X^3-mX+n$, where $X= x+\frac{a}{3}$, $m=\frac{a^2}{3}-b$ and $n=c + \frac{2a^3}{27} - \frac{ab}{3}$. \end{lemma} 
\begin{proof} Straightforward. \end{proof}  

\begin{proposition}\label{prop:PntsCubicForm}
The set of points $E_{D'}(\mathbb{Q}) \setminus \lambda(E_{D}(\mathbb{Q}))$ is non-empty if and only if there exists an irreducible, integral, binary cubic form $F_{3}$ 
of discriminant $disc(F_{3}) = k^{6}D$, where $k$ is some nonzero integer, and $F_{3}$ has leading coefficient an integral cube, i.e. $F_{3}$ is of the form $F_{3} = [a^{3},b,c,d]$, for some $a, b, c, d \in \mathbb{Z}$.
\end{proposition}
\begin{proof}
$(\Rightarrow)$ Assume that $E_{D'}(\mathbb{Q}) \setminus \lambda(E_{D}(\mathbb{Q})) \neq \emptyset$. By Lemma~\ref{PointForm}, $P$ is of the form $P=(x/z^{2},y/z^{3})$. Under the Fundamental $3$-Descent map, this point corresponds to the non-trivial element 
$$\Psi'(P) = y/z^{3} + 12\sqrt{D'} \in K_{D'}^{\times}/(K_{D'}^{\times})^{3}.$$ In other words, $\Psi'(P)$ is a rational element of $K_{D'}^{\times}$ that is not a cube in $K_{D'}$.

We assume first that $\text{gcd}(6,xy)=1$ and we form the polynomial
\begin{equation}\label{FracPoly}f_{P}(X) = X^{3}-\frac{x}{12z^{2}}X+\frac{y}{108z^{3}} \in \mathbb{Q}[x].\end{equation} We see that 
its discriminant is $$disc(f_{P}) = 4(\frac{x}{12z^{2}})^{3}-27(\frac{y}{108z^{3}})^{2} = D.$$ 
By `homogenizing' $f_{P}$ we obtain the following (rational) binary cubic form, also of discriminant $D$,
\begin{equation}\begin{gathered}\label{FormDisc} F_{P}(X,Y) = X^{3}-\frac{x}{12z^{2}}XY^{2}+\frac{y}{108z^{3}}Y^{3} = \\ X^{3}-3xX(\frac{Y}{6z})^{2}+2y(\frac{Y}{6z})^{3} =
F_{3}(X,Y/6z) = MF_{3}(u,v),
\end{gathered}\end{equation}
where 
$$M=\begin{pmatrix} 1& 0 \\ 0 & 1/(6z) \end{pmatrix} \in GL(2,\mathbb{Q}), \ \text{and}$$ 
$$F_{3}(u,v) = u^{3}-3xuv^{2}+2yv^{3}$$ 
is an integral binary cubic form with leading coefficient an integral cube. Furthermore, from (\ref{FormDisc}) we deduce that 
$$disc(F_{3}) = (det(M)^{6})^{-1}disc(F_{P}) = (6z)^{6}D.$$
It remains to show that $F_{3}$ is irreducible. We observe that the element 
$$\mu_{P} \coloneqq y +12z^{3}\sqrt{D'} \equiv y/z^{3} + 12\sqrt{D'} = \Psi'(P) \bmod (K_{D'}^{\times})^{3},$$ is not a cube in $K_{D'}$ and therefore neither is 
$\nu_{P}=-y+12z^{3}\sqrt{D'} = -\overline{\mu_{P}}$, where the bar denotes the Galois conjugate of an element. By employing Cardano's classical formula for the zero's of cubic polynomials, \cite[\S 1.4, pg.25]{Hambleton}, we verify via direct computation that the zero's of $F_{3}(u,1)$ are given by
$$\delta_{P}^{(i)} = w^{i}(\nu_{P})^{(1/3)}+w^{-i}(\overline{\nu_{P}})^{(1/3)},$$ 
$$\text{for} \  0\leq i \leq2, \ \text{and} \ w \ \text{a cube root of unity}.$$ 
With $\nu_{P}$ not being a cube in $K_{D'}^{\times}$, this proves that $F_{3}(u,1)$ cannot have a quadratic factor, and consequently $F_{3}(u,1)$, and therefore $F_{3}(u,v)$, are irreducible.

Finally, if either $2$ or $3$ divide $x$ or $y$, then from Lemma~\ref{PointForm}(\ref{CasesDiv}) we know exactly the powers of $2$ or $3$ that divide $x$ and $y$. 
Therefore, the general form of the binary cubic form $F_{3}$ and the corresponding transformation matrix $M$, are:
$$F_{3} = [1,0,-3x/(9^{i}4^{j}),2y/(27^{i}4^{j})],$$ 
$$\text{with} \ disc(F_{3}) = (3^{(1-i)}2^{(1-j)}z)^{6}D,$$ 
$$\text{and both} \ 3x/(9^{i}4^{j}) \ \text{and} \  2y/(27^{i}4^{j}) \ \text{are integers, by Lemma~\ref{PointForm}(\ref{CasesDiv})},$$
$$\text{and} \ M=\begin{pmatrix} 1 & 0 \\ 0 & 1/(3^{(1-i)}2^{(1-j)}z)\end{pmatrix} \in GL(2,\mathbb{Q}),$$  $$\text{with} \ i,j \in \{0,1\} \ \text{and}$$
$$i = 1 \ \text{if and only if} \ 3 | x,$$
$$j = 1 \ \text{if and only if} \ 2 | x.$$

The elements $\mu_{P}$ will of course change accordingly and we observe that in all cases, $\mu_{P}$ is an integral primitive $3$-virtual unit and equivalently, $F_{3}(u,1)$ is always in standard form.

$(\Leftarrow)$ Assume now that there exists an integral, irreducible,  binary cubic form $F_{3} = [a^{3},b,c,d]$, of discriminant equal to $disc(F_{3}) = k^{6}D$, for some nonzero integer $C$. We observe that
\begin{equation}
    \begin{gathered}
        F_{3}(u,v)=a^{3}u^{3}+bu^{2}v+cuv^{2}+dv^{3} = \\
        (au)^{3}+b(au)^{2}(\frac{v}{a^{2}})+ca^{3}(zu)(\frac{v}{a^{2}})^{2}+da^{6}(\frac{v}{a^{2}})^{3} = \\
        F(au,\frac{v}{a^{2}}) = MF(u,v),
    \end{gathered}
\end{equation}
where $$F(u,v) = u^{3}+bu^{2}v + ca^{3}uv^{2} + da^{6}v^{3},$$
$$\text{and} \ M=\begin{pmatrix} a & 0 \\ 0 & 1/a^{2}\end{pmatrix} \in GL(2,\mathbb{Q}).$$
The discriminant of the new, also irreducible, integral binary cubic form is equal to $disc(F) = det(M)^{-1}disc(F_{3}) = (ak)^{6}D$. Consider the polynomial $F(u,1) = u^{3}+bu^{2} + ca^{3}u + da^{6}$. By employing Lemma~\ref{trinomials}, we obtain the irreducible polynomial 
$$F_{M,N}(U) = U^{3} - MU + N,$$
$$\text{with} \ M = \frac{b^{2}-3ca^{3}}{3}, \ N = \frac{27da^{6}+2b^{3}-9bca^{3}}{27}, \ \text{and}$$ \begin{equation}\label{GetPnt}disc(F_{M,N}) = 4M^{3}-27N^{2} = (ak)^{6}D.\end{equation}
But now, relation(\ref{GetPnt}), together with the fact that $F_{M,N}$ is irreducible, proves that the point $P_{M,N} = (\frac{12M}{(ak)^{2}},\frac{108N}{(ak)^{3}})$ belongs to $E_{D'}(\mathbb{Q})\setminus \lambda(E_{D}(\mathbb{Q}))$, via the same proof of Lemma~\ref{Pmn}.  
\end{proof}

Proposition~\ref{prop:PntsCubicForm} establishes a correspondence between the irreducible integral binary cubic forms of discriminant equal to $k^{6}D$, for some nonzero integer $k$, and the points of $E_{D'}(\mathbb{Q})\setminus \lambda(E_{D}(\mathbb{Q}))$. Under the Delone-Fadeev correspondence \cite{DelFad}, each such irreducible integral binary cubic form corresponds to an order $\mathcal{O}_{P} \subseteq \mathcal{O}_{L}$ of conductor $[\mathcal{O}_{L} : \mathcal{O}_{P}]$ equal to an integral cube, where $L$, as in the introduction, is a cubic field of discriminant $D$ and whose Galois closure has Galois group isomorphic to $S_{3}$. We have therefore established the following:
\begin{bijection}\label{bij:CubeConductor} There exists a bijection between the points 
$$P \in E_{D'}(\mathbb{Q})\setminus \lambda(E_{D}(\mathbb{Q}))$$ 
and suborders 
$$\mathcal{O}_{P} \subseteq \mathcal{O}_{L}, \ \text{with conductor a cube,}$$ where $L$ is as above.
\end{bijection}

\subsection{From IBCF's to Index Forms}
In this section we will show that such an order $\mathcal{O}_{P} \subseteq \mathcal{O}_{L}$ exists, if and only if there exists an Index Form in $IF_{L}(D)$ with leading coefficient a cube. Under the Delone-Fadeev correspondence, this Index Form corresponds to the maximal order $\mathcal{O}_{L}$. 

Note that Proposition~\ref{prop:PntsCubicForm} only implies the existence of such a form of discriminant $k^{6}D$ for some nonzero integer $k$. 

We will first need the following lemma:
\begin{lemma}\label{lem:basis}
To each point $P= (X_{P}/Z^{2},Y_{P}/Z^{3}) \in E_{D'}(\mathbb{Q})\setminus \lambda(E_{D}(\mathbb{Q}))$ there corresponds an order $\mathcal{O}_{P} \subseteq \mathcal{O}_{L}$ of conductor equal to $k^{3}$, for some non-zero integer $k$, and with basis $\mathcal{B}_{P} = \{1, \delta, \delta^{2}-A\}$, where $\delta$ is a root of the monic irreducible polynomial 
$$f_{P}(x)=x^{3}-Ax+B \coloneqq x^{3}-3X_{P}/(9^{i}4^{j})x+2Y_{p}/(27^{i}4^{j}) \in \mathbb{Z}[x].$$ The cubic field $L$ corresponds to the one under Bijection~(\ref{bij:CubeConductor}).
\end{lemma}
\begin{proof}
  To each point $P = (X_{P}/Z^{2},Y_{P}/Z^{3}) \in E_{D'}(\mathbb{Q})\setminus \lambda(E_{D}(\mathbb{Q}))$, Proposition~\ref{prop:PntsCubicForm} gives us the irreducible integral monic binary cubic form $$F_{P} = [1,0,-3X_{P}/(9^{i}4^{j}),2Y_{P}/(27^{i}4^{j})],$$ 
$$\text{with} \ disc(F_{P}) = (3^{(1-i)}2^{(1-j)}Z)^{6}D,$$
which under the Delone-Fadeev Correspondence, it corresponds to an order $\mathcal{O}_{P}$ of conductor equal to the integral cube $(3^{(1-i)}2^{(1-j)}Z)^{3}$.
To ease notation, we will denote this form by 
$$F_{P} \eqqcolon [1,0,-A,B].$$
We denote by $\delta$ any one of the roots of the monic irreducible polynomial 
    $$F_{P}(x,1) \coloneqq f_{P}(x)=x^{3}-Ax+B,$$ of discriminant $4A^{3}-27B^{2} = k^{6}D$, and we let 
    $$w_{1}=\delta \ \text{and} \ w_{2}=\delta^{2}-A.$$ 
To compute a basis of the corresponding order $\mathcal{O}_{P}$, 
    we first observe that 
    \begin{equation}\label{rln:basis}
    \begin{array}{l}
    w_{1}^{2} = w_{2}+A \\
    w_{1}w_{2} = -B \\
    w_{2}^{2} = -Bw_{1} -Aw_{2}.
    \end{array}
    \end{equation}
    Relations~(\ref{rln:basis}) imply that indeed the set 
    $$\mathcal{B}_{P} = \{1,w_{1},w_{2}\}$$
    is a basis for the order $\mathcal{O}_{P}$ \cite[Theorem 9]{Bhar1}. This basis is called \emph{normal}.
    \end{proof}
    
In the next proposition, we will employ Voronoi's algorithm (see for example \cite[Theorem 1.6]{Hambleton} or \cite[Theorems 2 \& 3]{AlWil}), and we will compute from $\mathcal{B}_{P}$, an explicit basis $\mathcal{B}_{L}$ for the maximal order $\mathcal{O}_{L}$ of $L$. We will then show how the corresponding Index Form must have leading coefficient an integral cube. 

\begin{proposition}\label{prop:CubeCoeff}
    The set $E_{D'}(\mathbb{Q})\setminus \lambda(E_{D}(\mathbb{Q}))$ is not empty, if and only if there exists an Index Form $\in IF_{L}(D)$ with leading coefficient a cube, where the cubic field $L$ corresponds to the one under Bijection~(\ref{bij:CubeConductor}).
\end{proposition}
\begin{proof}
    $(\Leftarrow)$ Denote by $F=[a^{3},b,c,d] \in IF_{L}(D)$ the given Index Form that corresponds to one of our cubic fields $L$. Then $F(1/a,0)=1$ and therefore we have the point 
    $$P = (4J(1/a,0),4H(1/a,0)) \in E_{D'}(\mathbb{Q})\setminus \lambda(E_{D}(\mathbb{Q})),$$ 
    via Cayley's Syzygy~(\ref{CayleySyzygy}) and the fact that $F$ is irreducible.
    
    $(\Rightarrow)$ Let $P=(X_{P}/Z,Y_{P}/Z) \in E_{D'}(\mathbb{Q})\setminus \lambda(E_{D}(\mathbb{Q}))$ and let 
    $$F_{P} = [1,0,-A,B] \in \mathbb{Z}[x,y]$$
    be the corresponding irreducible integral binary cubic form, as defined in Lemma~\ref{lem:basis}, 
with corresponding irreducible polynomial 
    $$f_{P}(x)=x^{3}-Ax+B,$$
    and $\delta$ being any one of its roots. We know from Proposition~\ref{prop:PntsCubicForm} and \cite[\S 1.4, pg.20]{Hambleton} that
    $$disc(F_{P}) = (3^{(1-i)}2^{(1-j)}Z)^{6}D = D[\mathcal{O}_{L} : \mathbb{Z}[\delta]]^{2}.$$
    The index of $\mathbb{Z}[\delta]$ in the maximal order is therefore equal to 
    $$i(\delta) = [\mathcal{O}_{L} : \mathbb{Z}[\delta]] = (3^{(1-i)}2^{(1-j)}Z)^{3}.$$ Since $D = 4m^{3}-27n^{2} \equiv 1 \bmod 4$ is squarefree and $3 \nmid D$, straightforward computations by following Voronoi's algorithm via the proof of \cite{AlWil}, will give us an integral basis for $\mathcal{O}_{L}$ via the normal basis of $\mathcal{O}_{P}$ established in Lemma~\ref{lem:basis}. As we see below, the basis for $\mathcal{O}_{L}$ differs, depending on whether $3|A$ or not. We recall from Proposition~\ref{prop:PntsCubicForm} that $3 | A$ if and only if $3 \nmid X_{P}$. Furthermore, for each prime $p$, we easily see that the exponent $r_{p}$ defined in \cite[pg. 164]{AlWil} and which decides the exponent of $k$ below, in our case is always $0$ or $3$. More specifically, 
   \begin{equation} 
   \begin{cases}
      r_{2} \ \text{and} \ r_{3} \ \text{are either} \ 0 \ \text{or} \ 3, \\
      r_{p} = 3 \ \text{for} \ p|Z \ \text{and} \ p|D   \\
      r_{p} = 3 \ \text{for} \ p|Z \ \text{and} \ p\nmid D  \\
      r_{p} = 0 \ \text{in all other cases.}
    \end{cases} 
    \end{equation}
    The basis $\mathcal{B}_{L}$ is as follows:
\begin{enumerate}
    \item $3|A$. In this case, \cite[Theorem 2]{AlWil} gives us the following basis for $\mathcal{O}_{L}$:
    $$\mathcal{B}_{L} = \Big{\{} 1, \delta, \frac{t^{2}-A+t\delta+\delta^{2}}{k^{3}}\Big{\}},$$
    where $k = 3\cdot2^{(1-j)}Z$, and $j = 1$ if and only if $2 | X_{P}$. 
    \item $3 \nmid A$. In this case, \cite[Theorem 3]{AlWil} gives us the following basis:
    $$\mathcal{B}_{L} = \Big{\{} 1, \frac{-t + \delta}{3}, \frac{t^{2}-A+t\delta+\delta^{2}}{9k^{3}}\Big{\}},$$
    where $k = 2^{(1-j)}Z$, and $j = 1$ if and only if $2 | X_{P}$. 
\end{enumerate}
We recall that 
$$\mathcal{B}_{P} = \{ 1, \delta, \delta^{2} - A\} = \{ 1, w_{1}, w_{2}\}.$$ 
We let 
$$\mathcal{B}_{L} = \{1, u_{1}, u_{2}\}$$ 
and we see that in the first case above, 
$$w_{1} = \delta = u_{1}, \ \text{and}$$
$$w_{2} = k^{3}u_{2}-tu_{1}-t^{2},$$
which gives us the following matrix
\begin{equation}\mathcal{U}_{1} = \begin{pmatrix} 1 & 0 & -t^{2} \\ 0 & 1 & -t \\ 0 & 0 & k^{3}
\end{pmatrix} \in GL(3,\mathbb{Q}),
\end{equation}
such that 
$$\begin{pmatrix} 1 & u_{1} & u_{2} \end{pmatrix} \mathcal{U}_{1} = \begin{pmatrix} 1 & w_{1} & w_{2}\end{pmatrix}.$$
In the second case, we have that
$$w_{1} = 3u_{1}+t, \ \text{and}$$
$$w_{2} = -3tu_{1}+9k^{3}u_{2}-2t^{2}.$$
The corresponding matrix in this case is
\begin{equation}\mathcal{U}_{2} = \begin{pmatrix} 1 & t & -2t^{2} \\ 0 & 3 & -3t \\ 0 & 0 & 9k^{3}
\end{pmatrix} \in GL(3,\mathbb{Q}).
\end{equation}

From the matrices $\mathcal{U}_{1}, \mathcal{U}_{2} \in GL(3,\mathbb{Q})$ we can extract the $2\times2$ transformation matrices, $M_{1}$ and $M_{2}$, which transform, in either case, the irreducible integral binary cubic form   $F_{P}$ to the Index Form that carresponds to the basis $\mathcal{B}_{L}$. These matrices are: 
\begin{equation} M_{1} = \begin{pmatrix} 1 & -t  \\ 0 & k^{3}
\end{pmatrix},  M_{2} = \begin{pmatrix} 3 & -3t  \\ 0 & 9k^{3}
\end{pmatrix} \in GL(2,\mathbb{Q}).
\end{equation}
Given these matrices $M_{1}, M_{2}$, from the form $F_{P}=[1,0,-A,B]$ and by following \cite[\S 3.4, Corollary 3.4 \& Remark 3.2]{Hambleton}, we obtain the following Index Form for each case:
\begin{equation} 
F_{1} = [k^{3},3t,\frac{3t^{2}-A}{k^{3}},\frac{t^{3}-At+B}{k^{6}}], 
\ \
F_{2}=[k^{3},t,\frac{3t^{2}-A}{9k^{3}},\frac{t^{3}-At+B}{27k^{6}}].
\end{equation}
Direct computation of the discriminant gives us in each case 
$$
disc(F_{1}) = \frac{4A^{3}-27B^{2}}{k^{6}} = D, \
disc(F_{2}) = \frac{4A^{3}-27B^{2}}{(3k)^{6}} = D,$$
verifying that indeed, $F_{1}$ and $F_{2}$ are Index Forms and have leading coefficient a rational cube.
\end{proof}

\section{On the rank of $E_{D}/\mathbb{Q}$}\label{sec:rank}
\subsection{On the parity of the rank}
The following commutative diagram contains all the important groups related to the elliptic curves $E_{D}$. Since the curves $E_{D}$ and $E_{D'}$ do not have any non-trivial rational $3$-torsion points, the exact sequences below are a more simplified version of the standard sequences that emerge from the method of \emph{descent by a rational $3$-isogeny}, as found for example in \cite{Kloo}, \cite{Silverman} and \cite{Top}.
\small
\begin{center}
\begin{equation}\label{diag:sequences}
\begin{tikzcd}[column sep=small]
	& 0 \arrow[d] & 0 \arrow[d] &  & \\
        0 \arrow[r] &  E_{D'}(\mathbb{Q})/\lambda(E_{D}(\mathbb{Q})) \arrow[r] \arrow[d]  & \mathcal{S}_{\lambda}(E_{D}) \arrow[r]  \arrow[d] & \Sha(E_{D})[\lambda]  \arrow[r] \arrow[d]  & 0 \\  
        0 \arrow[r] &  E_{D}(\mathbb{Q})/3(E_{D}(\mathbb{Q})) \arrow[r] \arrow[d]  & \mathcal{S}_{3}(E_{D}) \arrow[r]  \arrow[d] & \Sha(E_{D})[3]  \arrow[r] \arrow[d]  & 0 \\
        0 \arrow[r] &  E_{D}(\mathbb{Q})/\lambda'(E_{D'}(\mathbb{Q})) \arrow[r]  \arrow[d] & \mathcal{S}_{\lambda'}(E_{D'}) \arrow[r] \arrow[d]  & \Sha(E_{D'})[\lambda']  \arrow[r]  & 0 \\
       & 0 &  \frac{\Sha(E_{D'})[\lambda']}{\lambda(\Sha(E_{D})[3])} \arrow[d]& & \\
       & & 0 & &
        \end{tikzcd}
        \end{equation}
\end{center}    
\normalsize

Given the non-degenerate alternating pairing defined by Cassels in \cite{Cassels}, by assuming finiteness of the $3$-primary part $\Sha(E_{D})[3^{\infty}]$, the torsion group $\Sha(E_{D})[3]$ becomes even dimensional. Furthermore, the term
$\frac{\Sha(E_{D'})[\lambda']}{\lambda(\Sha(E_{D})[3])}$ is also even-dimensional as an $\mathbb{F}_{3}$-vector space \cite[Prop. 4.2]{Bhar2}. Therefore, from the diagram above we conclude:
\begin{equation}\label{parity} 
rank(E_{D'}(\mathbb{Q})) \equiv d(\mathcal{S}_{3}(E_{D})) \equiv d(\mathcal{S}_{\lambda}(E_{D})) + d(\mathcal{S}_{\lambda'}(E_{D'})) \bmod 2. \end{equation}

\begin{corollary}\label{ParityRanksDiagram}
    By assuming finiteness of the $3$-primary part $\Sha(E_{D})[3^{\infty}]$ of the Tate-Shafarevich group $\Sha(E_{D})$, the rank of the elliptic curves $E_{D}$, and consequently of $E_{D'}$, is always even for negative $D \in \mathcal{D}$ and is always odd for positive $D \in \mathcal{D}$.
\end{corollary}
\begin{proof}
    The result follows directly from Proposition~\ref{Selmer}, Diagaram~(\ref{diag:sequences}), and relation~(\ref{parity}).
\end{proof}

\subsection{Lower bounds}\label{subsec:LowerBounds}
As we have already mentioned in Section~\ref{sec:L[mu]}, for fundamental discriminants such as $D$, Hasse gives the number of non-conjugate cubic fields of discriminant $D$ to be equal to $\frac{(3^{r_{3}(D)}-1)}{2}$. Given $D \in \mathcal{D}$, let us denote by $\#(cubic)$ the number of non-conjugate cubic fields $L_{[\mu]}$ of discriminant $D$ whose $SL(2,\mathbb{Z})$-equivalence class of Index Forms contains an Index Form with leading coefficient a rational cube. Let $$\mathcal{M} = \langle [\mu]_{i} \ | \ 1 \leq i \leq \#(cubic) \rangle $$ denote the subgroup of $Sel_{3}(K_{D'})$ generated by the corresponding classes of $3$-virtual units.  We denote by $\mathfrak{r}_{3}$ its rank as an $\mathbb{F}_{3}$-vector space. Then the rank of $E_{D}$, and therefore of $E_{D'}$, is bounded as follows:
\begin{proposition}\label{Rank} By assuming finiteness of the $3$-primary part $\Sha(E_{D})[3^{\infty}]$ of the Tate-Shafarevich group $\Sha(E_{D})$ of the elliptic curves $E_{D}$, the rank of the elliptic curves $E_{D}$, and therefore of $E_{D'}$, is bounded as follows: \\
(1) If $\mathfrak{r}_{3}$ is odd, then
$$(i) \  \ 2 \leq  \mathfrak{r}_{3} + 1 \leq rank(E_{D}) \leq 2r_{3}(D), \ \text{if} \ D < -4$$  
$$(ii) \ \ 1 \leq \mathfrak{r}_{3} \leq rank(E_{D}) \leq 2r_{3}(D) +1, \ \text{if} \ D > 4.$$
(2) If $\mathfrak{r}_{3}$ is even, then
$$(i) \  \ 2 \leq  \mathfrak{r}_{3} \leq rank(E_{D}) \leq 2r_{3}(D), \ \text{if} \ D < -4$$  
$$(ii) \ \ \mathfrak{r}_{3} +1 \leq rank(E_{D}) \leq 2r_{3}(D) +1, \ \text{if} \ D > 4.$$
 \end{proposition}
\begin{proof} From the definition of $\mathcal{M}$, and via the results of Proposition~\ref{prop:CubeCoeff}, every element of $\mathcal{M}$ corresponds, under the Fundamental $3$-descent map $\Psi'$, to a point in $E_{D'}(\mathbb{Q}) \setminus \lambda(E_{D}(\mathbb{Q}))$. Therefore, from Corollary~\ref{cor:ranks}, Lemma~\ref{Pmn} and the short exact sequence~(\ref{splitrank}), we have that 
$$1 \leq \mathfrak{r}_{3} = dim_{\mathbb{F}_{3}}(E_{D'}(\mathbb{Q})/\lambda(E_{D}(\mathbb{Q}))) \leq rank(E_{D}(\mathbb{Q}).$$ This, together with the parity result of Corollary~\ref{ParityRanksDiagram}, establish the lower bounds.

We now recall that $rank(E_{D}(\mathbb{Q})) \leq dim_{\mathbb{F}_{3}}(\mathcal{S}_{3})$ and therefore, from the second vertical exact sequence of Diagram~(\ref{diag:sequences}) and from Proposition~\ref{Selmer}, we obtain the upper bounds, again together with the parity result of Corollary~\ref{ParityRanksDiagram}.
\end{proof}

An immediate corollary of Proposition~\ref{Rank} is the following: 
\begin{corollary}\label{sha} For the negative discriminants $D \in \mathcal{D}$ and their corresponding imaginary quadratic fields $K_{D}$, if $r_{3}(D) = 1$ then, by assuming finiteness of $\Sha(E_{D})[3^{\infty}]$, we always have that $\text{dim}_{\mathbb{F}_{3}}(\Sha(E_{D}))[3] = 0.$
\hfill $\square$ \end{corollary}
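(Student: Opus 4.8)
The plan is to read everything off Proposition~\ref{Rank} together with the exact sequences of Section~\ref{CurvesAndSelmer}; the word ``immediate'' is justified because the rank bounds and the Selmer bounds turn out to be simultaneously sharp when $r_{3}(D)=1$. First I would pin down the numerics: since $1\le r_{3}(monic)\le r_{3}(D)$ and $r_{3}(D)=1$ by hypothesis, necessarily $r_{3}(monic)=1$, which is odd; hence for $D<-4$ we are in case (1)(i) of Proposition~\ref{Rank}, whose two bounds collapse,
$$2=r_{3}(monic)+1\le rank(E_{D})\le 2r_{3}(D)=2,$$
so $rank(E_{D})=2$. (This is exactly where the assumed finiteness of $\Sha(E_{D})[3^{\infty}]$ enters, since the lower bound in Proposition~\ref{Rank} rests on \emph{the parity result}.) By (\ref{R}) this also gives $\dim_{\mathbb{F}_{3}}E_{D}(\mathbb{Q})/3E_{D}(\mathbb{Q})=2$.

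Next I would show that the isogeny parts of both Tate--Shafarevich groups vanish. Splicing the short exact sequence (\ref{splitrank}) with (\ref{selsha}) and with the $\lambda'$-analogue of (\ref{selsha}) supplied by Remark~\ref{dualisogenydefn}, one obtains the dimension identity
$$rank(E_{D})=d(\mathcal{S}_{\lambda})+d(\mathcal{S}_{\lambda'})-\dim_{\mathbb{F}_{3}}\Sha(E_{D})[\lambda]-\dim_{\mathbb{F}_{3}}\Sha(E_{D'})[\lambda'].$$
For $D<-4$, Proposition~\ref{Selmer} gives $d(\mathcal{S}_{\lambda})=d(\mathcal{S}_{\lambda'})=r_{3}(D)=1$, so the right-hand side equals $2-\dim_{\mathbb{F}_{3}}\Sha(E_{D})[\lambda]-\dim_{\mathbb{F}_{3}}\Sha(E_{D'})[\lambda']$; since the left-hand side is $2$, both $\Sha(E_{D})[\lambda]$ and $\Sha(E_{D'})[\lambda']$ are trivial. (Alternatively, $\Sha(E_{D})[\lambda]=0$ follows directly from the Corollary preceding Proposition~\ref{Rank}, since $r_{3}(D)-r_{3}(monic)=0$, after which only the vanishing of $\Sha(E_{D'})[\lambda']$ still has to be extracted from the rank identity.)

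Finally I would propagate this to the full $3$-torsion. Because $\lambda'\circ\lambda$ is multiplication by $3$ on $E_{D}$, the induced maps on Tate--Shafarevich groups satisfy $\lambda'_{*}\circ\lambda_{*}=\times 3$ on $\Sha(E_{D})$; moreover the long exact cohomology sequence of (\ref{lisogeny}), together with its $\lambda'$-version, identifies $\ker\bigl(\lambda_{*}\colon\Sha(E_{D})\to\Sha(E_{D'})\bigr)=\Sha(E_{D})[\lambda]$ and $\ker\bigl(\lambda'_{*}\colon\Sha(E_{D'})\to\Sha(E_{D})\bigr)=\Sha(E_{D'})[\lambda']$. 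Hence for any $x\in\Sha(E_{D})[3]$ we have $\lambda_{*}x\in\ker(\lambda'_{*})=\Sha(E_{D'})[\lambda']=0$, so $x\in\ker(\lambda_{*})=\Sha(E_{D})[\lambda]=0$; therefore $\Sha(E_{D})[3]=0$, i.e.\ $\dim_{\mathbb{F}_{3}}\Sha(E_{D})[3]=0$.

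I do not expect a genuinely hard step here: the whole content is that for $r_{3}(D)=1$ Propositions~\ref{Selmer} and~\ref{Rank} force $\Sha(E_{D})[\lambda]$ and $\Sha(E_{D'})[\lambda']$ to vanish at once. The only points deserving a line of justification are the identification $\ker(\lambda_{*}|_{\Sha})=\Sha(E_{D})[\lambda]$ (immediate from the long exact sequence) and, should one prefer the alternative route through the $\times 3$-Selmer group via $0\to\mathcal{S}_{\lambda}\to\mathcal{S}_{3}(E_{D})\to\mathcal{S}_{\lambda'}$, the vanishing of the torsion correction term $E_{D'}(\mathbb{Q})[\lambda']/\lambda(E_{D}(\mathbb{Q})[3])$, which holds because neither $D$ nor $-3D$ is a square and hence $E_{D}$ and $E_{D'}$ carry no nontrivial rational $3$-torsion.
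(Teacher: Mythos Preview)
Your proof is correct and is precisely the detailed unpacking that the paper omits: the paper gives no proof at all beyond declaring the corollary ``immediate'' from Proposition~\ref{Rank} and placing a $\square$. Your argument---forcing $r_{3}(monic)=1$, collapsing the bounds in case~(1)(i) to get $\mathrm{rank}(E_D)=2$, reading off $\Sha(E_D)[\lambda]=\Sha(E_{D'})[\lambda']=0$ from the Selmer dimension count, and then propagating to $\Sha(E_D)[3]$ via $\lambda'_*\lambda_*=\times 3$---is exactly the natural way to make ``immediate'' precise, and the alternative route through $\mathcal{S}_3(E_D)$ you mention is the only other sensible phrasing of the same content.
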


\section{Homogeneous Spaces}\label{Torsors}
We recall the definition of a cover map between two curves (see for example \cite[Remark 24]{Bhar2}), adjusted to our notation and context.
\begin{definition} 
A $\lambda$-covering for our elliptic curve $E_{D'}/\mathbb{Q}$, is a rational map between curves $C \rightarrow E_{D'}$ which is a twist of $\lambda$. By descent, the group $H^{1}(G_{\mathbb{Q}},\mathcal{T}_{D})$ is in bijection with isomorphism classes of such $\lambda$-coverings. \end{definition}

We saw earlier in the paper, via Cayley's Syzygy~(\ref{CayleySyzygy}), that there is an explicit degree-$3$ map $\varphi$ from the genus-$1$ curves
$$C: F(x,y)=z^{3}, \ F \in IF_{L}(D)$$ to the elliptic curve $E_{D'}$, as follows:

\begin{align*}
\varphi: C & \longrightarrow E_{D'} \\
(x:y:z) & \longmapsto (4J(x/z,y/z),4H(x/z,y/z)).
\end{align*}
These curves $C/\mathbb{Q}$ are called Homogeneous Spaces or Torsors for our elliptic curve $E_{D'}/\mathbb{Q}$.

Given Bijection~\ref{Bij:CocyclesFields}, and under the Davenport-Heilbronn correspondence, we obtain the following bijection:
\begin{bijection}\label{Bij:Torsors}
Each $SL(2,\mathbb{Z})$-equivalence class of Index Forms $[F]$ of discriminant $D \in \mathcal{D}$, gives an equivalence class $[C]$ of Torsors for $E_{D'}$, to which there corresponds a subgroup $\langle [\xi] \rangle \subseteq \mathcal{S}_{\lambda}$ that fixes an unramified cubic extension $L_{[\mu]}$ of $K_{D}$, where $\langle [\xi] \rangle$ corresponds to $\langle [\mu] \rangle$ under bijections~\ref{Bij:CocyclesTriples}~and~\ref{Bij:CocyclesFields}.
\end{bijection}

\begin{proposition}\label{ViolateHassePrinciple}
A cocycle $\xi \in \mathcal{S}_{\lambda}$ corresponds to the trivial class in $\Sha(E_{D})[\lambda]$ if and only if the group $\langle [\xi] \rangle$ it generates, corresponds to a class of Index Forms $IF_{L}(D)$ that contains a form with leading coefficient an integral cube.
\end{proposition}
\begin{proof}
A cocycle $\xi \in \mathcal{S}_{\lambda}$ corresponds to the trivial class in $\Sha(E_{D})[\lambda]$, 

if and only if it belongs to the image of the connecting homomorphism $\delta$ of the short exact sequence~(\ref{selsha}), 

if and only if there is a rational point $P \in E_{D'}(\mathbb{Q})\setminus\lambda(E_{D}(\mathbb{Q}))$ such that $\delta(P) = \xi$, 

if and only if by Proposition~\ref{prop:CubeCoeff} and under Bijection~\ref{Bij:Torsors} there is a corresponding Index Form in $IF_{L}(D)$ with leading coefficient an integral cube.
\end{proof}

\subsection{A Remark regarding Solvability and Uniform Boundedness}\label{subs:solvability}
We call a cocycle $\xi \in \mathcal{S}_{\lambda}$ \emph{solvable} if it is in the image of the connecting homomorphism $\delta$ of (\ref{selsha}). From Proposition~\ref{ViolateHassePrinciple}, $\xi$ is solvable if and only if any (and therefore every) Torsor in the corresponding class $[C]$,  under Bijection~(\ref{Bij:Torsors}), has a rational point. We will also call such a class of Torsors,  \emph{solvable}. Let $C_{1}: F_{1}(x,y)=z^{3}$ and $C_{2}: F_{2}(x,y)=z^{3}$ be two Torsors that belong to a solvable class $[C]$, and let $Q_{1}=(x_{1}:y_{1}:z)$ be a rational point on $C_{1}$. The transformation matrix in $SL(2,\mathbb{Z})$ that transforms $F_{1}$ into $F_{2}$, also transforms $Q_{1}$ into a rational point $Q_{2}$ of $C_{2}$, but we notice that it leaves the $z$-coordinate unchanged. Therefore, $Q_{2}$ is of the form $Q_{2}=(x_{2}:y_{2}:z)$. Furthermore, since by Proposition~\ref{ViolateHassePrinciple} there is always an Index Form of the form 
$$F_{a}=[a^{3},b,c,d], \ \text{for some} \ a,b,c,d \in \mathbb{Z},$$ the point $Q_{a} = (1, 0 , a)$ is always a point that corresponds to a solvable class of Torsors, and every other Torsor in $[C]$ must have a point of the form $Q = (x:y:a)$. More specifically, if $C: F(x,y)=z^{3} \in [C]$ is such that $F(x,y) = MF_{a}(x,y)$, for some $M \in SL(2,\mathbb{Z})$, then the $x$- and $y$- coordinates of the point $Q$ on $C$, are determined by $M$ as follows:
$$\begin{pmatrix}
    x \\ y
\end{pmatrix} = M^{-1}\begin{pmatrix}
    1 \\ 0
\end{pmatrix}.$$

The rank of the solvable part of $\mathcal{S}_{\lambda}$ is equal to $\mathfrak{r}_{3}$, which we defined in Section~\ref{subsec:LowerBounds}. The non-conjugate \emph{monogenic} cubic fields of discriminant $D$ form a subgroup of $\mathcal{M}$ of rank $\leq \mathfrak{r}_{3}$. We recall that we always have $1 \leq \mathfrak{r}_{3}$ since we always have at least one monogenic cubic field of discriminant $D$, namely the one defined by $f_{m,n}$. To determine which of the $\Big{(}\frac{3^{r_{3}(D)}-1}{2}\Big{)}$-many non-conjugate cubic fields $L$ of discriminant $D$ are monogenic, is enough to compute their $RIF_{L}$ and observe whether it has leading coefficient $1$. The $RIF_{L}$ has rather small coefficients bounded by the discriminant, and there are known algorithms to compute both the cubic field $L$ and their corresponding $RIF_{L}$; see for example \cite{Belabas} and \cite{Cremona} respectively. 

To compute the whole rank $\mathfrak{r}_{3}$ now, by the discussion in the first paragraph of this section, amounts to deciding whether the Thue equations $F(x,y) = \pm 1$ have \emph{rational} solutions, say of the form $(x/a,y/a)$ for any nonzero integer $a$, where $F \in IF_{L}(D)$ is any representative. The problem of determining only the \emph{integral} solutions can be solved efficiently \cite[pp.1-2]{Evertse}. When looking for rational solutions, this problem is equivalent, as we can see from Lemma~\ref{PointForm} and Cayley's Syzyzygy~(\ref{CayleySyzygy}), to finding \emph{integral} points of infinite order on the elliptic curves $E_{a}: y^{2}=x^{3}-27 \cdot 16Da^{6}$. These curves $E_{a}$ are all isomorphic to each other and to $E_{D'}$. 

For any irreducible integral binary cubic form $f(x,y)$, denote by $N_{f}(m)$ the number of \emph{integral} solutions of the Thue equation $C_{f,m}: f(x,y)=m$. Silverman in \cite[Theorem A]{Silverman2} gave an upper bound for $N_{f}(m)$ in terms of the rank of the elliptic curve $E_{f,m}: y^{2} = x^{3} + m^{2}disc(f)$, $E_{f,m}$ being the Jacobian of $C_{f,m}$. In the same paper, Silverman asked the question \cite[pg.396]{Silverman2}:
\begin{equation}\label{question}\text{Does} \ \sup_{m \ cube-free}N_{f}(m) = \infty \ ?\end{equation}
Obviously if $N_{f}(m)$ could get arbitrarily large this would imply that the rank of the corresponding elliptic curves $E_{f,m}$ would also get arbitrarily large, providing at the same time a negative answer to the open question of whether the ranks of elliptic curves over $\mathbb{Q}$ are uniformly bounded. In Silverman's case however, $N_{f}(m)$ can be large only very rarely. But he only considered the case where $m$ is \emph{cube-free}, which is a case that never occurs for our elliptic curves $E_{D'}$. We remark for clarity that the elliptic curves $E_{D'}$ are the Jacobians of the curves $F(x,y) = a^{3}, F \in IF_{L}(D)$. Specifically for the case of $m$ being non-cube-free, Silverman cites Mahler's result \cite{Mahler}, which gives that in this case there are infinitely many $m$ for which $N_{f}(m) > (\log|m|)^{1/4}$. 

It would be very interesting, within the context of Uniform Boundedness, to ask Silverman's question~(\ref{question}) again, specifically for the curves $E_{D'}$ and with the condition that $m$ is a cube. 

In the same spirit, it also makes sense to consider infinite families of quadratic number fields with ideal class group of large $3$-rank, and with a large number of corresponding cubic monogenic fields. To the best of our knowledge, the authors of \cite{Levin} give the current highest lower bound for the $3$-rank of such families, which is $5$. Moreover, the authors of \cite{DavSpYoo} construct an infinite family of triples of polynomials of the same discriminant, defining distinct monogenic cubic fields. Even though their discriminant is never squarefree, it does raise the question whether similar constructions could yield infinite families of such tuples of polynomials, which have the same squarefree discriminant. 

Along these lines let us add here that the three famous elliptic curves of Quer \cite{Quer} of rank $12$, are elliptic curves of the form $E_{D}$ with $D = D_{m,n}$ and 
$(m_{1} , n_{1}) = (-363179 , 89599201), (m_{2} , n_{2}) = (-669454 , 264029105)$ and $(m_{3} , n_{3}) = (-502270 , 313570727)$. The $3$-rank of the ideal class group of the corresponding imaginary quadratic number fields $K_{D}$ is exactly $6$, implying that in this case the whole part of the corresponding Selmer group $\mathcal{S}_{\lambda}$ is solvable.  

We close this section with a lemma that gives us some information on whether the point $Q_{a}=(1:0:a)$ is a point on the Reduced Index Form, in the case of a solvable Torsor.

We denote by $C_{RIF_{L}} \in [C]$ the Torsor whose corresponding binary cubic form is the $RIF_{L}$ of $IF_{L}(D)$. In the case of positive discriminants $D>0$, and by recalling from Section~\ref{subsec:BCF} that the Reduced Index Form is unique in its equivalence class, we can say the following regarding the possible existence of a rational point on the corresponding Torsor.
\begin{corollary} Assume $F$ is an Index Form of discriminant $0 < D \in \mathcal{D}$, whose corresponding Torsor $C:F(x,y)=z^{3}$ defines a solvable class. Then the Reduced Index Form $RIF_{L}$, with $C_{RIF_{L}} \in [C]$, is either of the form $RIF_{L}=[a^{3},b,c,d]$, and therefore the point $Q_{a} = (1,0,a)$ is a point on the corresponding Torsor $C_{RIF_{L}}$, or every rational point $Q =(x_{a}:y_{a}:a)$ of $C_{RIF_{L}}$, evaluated at the Hessian $H$ of $RIF_{L}$, should satisfy the inequality 
$$H(x_{a}/a,y_{a}/a) \geq \frac{1}{2a^{2}}\sqrt{3D},$$ equivalently $$H(x_{a},y_{a}) \geq \frac{1}{2}\sqrt{3D}.$$    \end{corollary}
\begin{proof}
    The proof is almost identical to that of \cite[Lemma 5.1]{Bennett} and so we omit it.
\end{proof}

\section{Example of curves violating the Hasse Principle}\label{sec:examples}
The real quadratic number field $K_{48035713} = K_{229,3}$ has ideal class group with $3$-rank $r_{3}(D) = 2$, therefore, Proposition~\ref{Selmer} gives us that $d(\mathcal{S}_{\lambda}) = 2$. On the other hand, the rank of the elliptic curve $E_{D}$ was computed in MAGMA and PARI/GP and was found equal to $1$. Hence, the short exact sequence (\ref{splitrank}) and Lemma~\ref{Pmn} imply that the rank of the quotient group $E_{D'}(\mathbb{Q})/\lambda(E_{D}(\mathbb{Q}))$ is exactly equal to $1$, and the short exact sequence (\ref{selsha}) implies that the rank of $\Sha(E_{D})[\lambda]$ should also be equal to $1$. Denote by $\langle [\xi_{1}]\rangle \subseteq \mathcal{S}_{\lambda}$ the subgroup corresponding to the solvable part of $\mathcal{S}_{\lambda}$. The group $\langle [\xi_{1}]\rangle$ defines the only cubic field, up to conjugation, with an Index Form in its $SL(2,\mathbb{Z})$-equivalence class whose leading coefficient is a cube. If any one of the other three, $3 = (\frac{3^{r_{3}(D)}-1}{2} - 1)$, remaining non-conjugate cubic fields would have such a form, then the subgroup of $\mathcal{S}_{\lambda}$ corresponding to this field must also have a form with leading coefficient a cube, implying that the rank of $E_{D'}(\mathbb{Q})/\lambda(E_{D}(\mathbb{Q}))$ and therefore of $E_{D}$ must be at least $2$, by Lemma~\ref{lem:IndependencePnts}, a contradiction. 

Let us denote by $L^{i}, 1 \leq i \leq 4$, the $4$ non-conjugate cubic fields of discriminant $D$. Let $L^{1}$ be the one with defining polynomial the monic polynomial $f_{229,3}(x) = x^3 - 229x +3$, equivalently with Index Form $[1,0,-m,n]$. Apart from $f_{229,3}(x)$, as expected, the remaining three cubic polynomials that we computed in PARI/GP were monic but of discriminant $t^2D$ for some $0 \neq t \in \mathbb{N}\setminus \mathbb{N}^{3}$. We followed the procedure outlined in \cite[Proposition 2.1]{BelCoh} and we computed the four binary cubic forms $F_{i}(x,y)$ corresponding to $L^{i}$:
$$F_{1}(x,y) = x^3-229xy^2+3y^3,$$
$$F_{2}(x,y) = -134x^3 + 45yx^2 + 41y^2x - 2y^3,$$ 
$$F_{3}(x,y) = -19x^3 + 16yx^2 + 83y^2x - 7y^3,$$
$$F_{4}(x,y) = 23x^3 + 20yx^2 - 75y^2x - 17y^3.$$

The curve $C_{1}: F_{1}(x,y) = z^3$ has at least two rational points, namely $Q_{1}=(1 : 0 : 1)$ and $Q_{m/n}=(1 : m/n : 1)$. The point $Q_{1}$ is the point $Q_{a}$ we defined above, with $a=1$ since $F_{1}$ is monic. 

The other three curves $C_{i}: F_{i}(x,y) = z^3, \ 2 \leq i \leq 4$,  as we already mentioned above, cannot have any rational points but since they correspond to elements in $\mathcal{S}_{\lambda}$, they violate the Hasse principle. 

We remark that any other Torsor $C_{F}: F(x,y) = z^{3}$, with $F$ belonging to the $SL(2,\mathbb{Z})$-equivalence class of either $F_{2}, F_{3}$, or $F_{4}$, cannot have any rational points either, and therefore violates the Hasse Principle as well. We also note that these three classes $[C_{2}], [C_{3}],$ and $[C_{4}]$, even though they correspond to non-conjugate cubic fields, they cannot belong to three independent classes in $\Sha(E_{D})[\lambda]$, since the dimension of this $\mathbb{F}_{3}$-vector space is~$1$. The reason two distinct classes of Torsors might be mapped to the same class in $\Sha(E_{D})[\lambda]$, is because they might become equivalent modulo the solvable part of $\mathcal{S}_{\lambda}$, as one can infer from the short exact sequence (\ref{selsha}).

\section{Final Remarks}
In the diagaram below, we summarize some of the bijections that were discussed or established in the previous sections. Only bijections $(2)$ and $(5)$ were not discussed before, but these follow directly from Class Field Theory and Kummer Theory respectively.
\vspace{0.5cm}

\small
\begin{tikzpicture}[>=latex] 
\node[myboxrectangle] (Selmer) {$\langle [\xi] \rangle \subseteq \mathcal{S}_{\lambda}$};
\node[myboxrectangle] (LD) [right =of Selmer] {$L_{[\mu]}$ \\ $L_{[\mu]}/K_{D}$ unramified} edge [<->] (Selmer);
\node[myboxrectangle] (KD) [right =of LD] {$\langle[\mathfrak{a}_{D}]\rangle \in Cl(K_{D})_{3}/(Cl(K_{D})_{3})^{3}$} edge [<->] (LD);
\node[myboxrectangle] (KD') [below =of KD] {$\langle[\mathfrak{a}_{D'}]\rangle \in Sel_{3}(K_{D'})$} edge [<->] (KD);
\node[myboxrectangle] (3Units) [below =of KD'] {Class of $3$-virtual units \\ $[\mu] \in K_{D'}^{\times}/(K_{D'}^{\times})^{3}$} edge [<->] (KD');
\node[myboxrectangle] (IndexForms) [below =of LD] {$SL(2,\mathbb{Z})$-equivalence classes \\ $[F] \in IF_{L_{[\mu]}}(D)$} edge [<->] (LD);
\node[myboxrectangle] (Torsors) [below =of IndexForms] {Classes of $E_{D'}$-Torsors $[C]$ \\  $C: F(x,y)=z^{3}$} edge [<->](IndexForms);
    \draw [-latex] (Selmer) --  node [above] {\tiny{(1)}} (LD) ;
      \draw [-latex] (LD) --  node [above] {\tiny{(2)}} (KD) ;
            \draw [-latex] (KD) --  node [right] {\tiny{(5)}} (KD') ;
      \draw [-latex] (KD') --  node [right] {\tiny{(6)}} (3Units) ;
            \draw [-latex] (LD) -- node [right] {\tiny{(3)}} (IndexForms) ;
             \draw [-latex] (IndexForms) -- node [right] {\tiny{(4)}} (Torsors) ;
              \path[<->] [bend left=5]
              (LD) edge node [right] {\tiny{(7)}} (3Units);
\end{tikzpicture}
\normalsize 
\vspace{0.5cm}

Our final remark concerns the form of the discriminants. Let us consider instead of our discriminants $D_{m,n}$, the general case of the Honda discriminants $\Delta = \mathfrak{f}^{2}disc(K_{\alpha,\beta})$ discussed in Section~\ref{subs:discs}. We can easily see that the subfamily of these discriminants satisfying $6 \nmid \Delta$ and $\Delta \equiv \pm 4 \bmod 9$, would still satisfy Propositions~\ref{Selmer}, \ref{prop:PntsCubicForm} and \ref{prop:CubeCoeff}, Lemma~\ref{PointForm}, the equivalence result~(\ref{parity}), and Corollary~\ref{ParityRanksDiagram}. We notice that in this case, and for $\Delta < 0$, the rank of the elliptic curves $E_{\Delta}$ might be zero, whereas for $\Delta > 0$, we must always have at least one rational point, always by assuming finiteness of the Tate-Shafarevich group. Furthermore, there is no monogenic field amongst the unramified cubic extensions of $K_{\Delta}$, equivalently $\Delta \notin \mathcal{D}$, if and only if $E_{\Delta'}$ has no \emph{integral} points. And therefore, in the case of $\Delta > 0$, the existence of no monogenic fields amongst the unramified cubic extensions of a quadratic number field $K_{\Delta}$, would yield elliptic curves $E_{\Delta'}$ with nontrivial rank, yet no integral points. 

\section*{Acknowledgements}
The author is grateful to the anonymous Referee for their careful review of the paper, and for providing valuable comments and suggestions that improved the quality of this manuscript. The author would like to thank Antoine Joux and the CISPA Helmholtz Center for Information Security, as well as the Max Planck Institute for Mathematics in Bonn, for their hospitality and support during the writing of this paper. The author would also like to thank Evis Ieronymou for interesting discussions, and the University of Cyprus where she spent two weeks as a research visitor while working on this paper.

\begin{figure}[!b]
    \includegraphics[width = 0.12\textwidth]{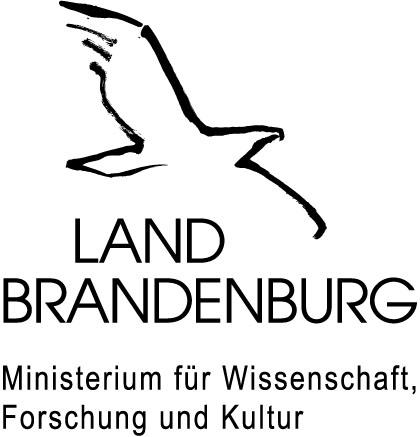}
\end{figure}

\end{document}